\title{\textbf{On generalized covering and avoidance properties of finite groups and  saturated fusion systems}\thanks{ \scriptsize\emph{E-mail addresses:}
      zsmcau@cau.edu.cn\,(S. Zhang); zhencai688@sina.com\,(Z. Shen).}}
\author{Shengmin Zhang,   Zhencai Shen\\
\quad
\\
{\small College of Science,
China Agricultural University,
Beijing 100083, China}}
\date{}
\newtheorem{theorem}{Theorem}[section]
\newtheorem{lemma}[theorem]{Lemma}
\newtheorem{corollary}[theorem]{Corollary}
\theoremstyle{definition}
\newtheorem{definition}[theorem]{Definition}
\newtheorem{remark}[theorem]{Remark}
\newtheorem{example}[theorem]{Example}
\let\expandafter\oldproof\csname\string\proof\endcsname
\let\oldendproof\endproof
\renewenvironment{proof}[1][\proofname]{%
  \oldproof[\bfseries\scshape #1]%
}{\oldendproof}
\def\trianglelefteqslant{\ThisStyle{\mathrel{%
  \stackinset{r}{.75pt+.15\LMpt}{t}{.1\LMpt}{\rule{.3pt}{1.1\LMex+.2ex}}{\SavedStyle\leqslant}%
}}}
\renewcommand{\unlhd}{\trianglelefteqslant}
\renewcommand{\leq}{\leqslant}
\renewcommand{\geq}{\geqslant}
\begin{document}
\maketitle
\begin{abstract}
A subgroup $A$ of a finite group $G$ is said to be a $CAP$-subgroup of $G$, if for any chief factor $H/K$ of $G$, either $A H= AK$ or $A\cap H = A \cap K$. Let $p$ be a prime, $S$ be a $p$-group and $\mathcal{F}$ be a saturated fusion system over $S$. Then $\mathcal{F}$ is said to be supersolvable, if there exists a series of $S$, namely $1 = S_0 \leq S_1 \leq \cdots \leq S_n = S$, such that $S_{i+1}/S_i$ is cyclic, and $S_i$ is strongly $\mathcal{F}$-closed for any $i=0,1,\cdots,n$. In this paper, we first introduce the concept of strong $p$-$CAP$-subgroups, and investigate the structure of finite groups under the assumptions that some subgroups of $G$ are partial $CAP$-subgroups or strong $(p)$-$CAP$-subgroups of $G$, and obtain some criteria for a group $G$ to be $p$-supersolvable. After that, we investigate the characterizations for supersolvability of $\mathcal{F}_S (G)$ under the assumptions that some subgroups of $G$ are partial $CAP$-subgroups or strong $(p)$-$CAP$-subgroups of $G$, and obtain some criteria for a fusion system $\mathcal{F}_S (G)$ to be supersolvable. The above results improve some known results and develop some new results about $CAP$-subgroups from fusion systems. 

\noindent{\bf Keywords:} Strong $p$-$CAP$-subgroups, Partial $CAP$-subgroups, Saturated formation, $p$-supersolvable subgroups, Saturated fusion systems, Supersolvable fusion systems.\\
 \noindent{\bf MSC:}  20D10, 20D15, 20D20.

\end{abstract}

\section{Introduction}
All groups considered in this paper are finite, and we begin by introducing the first part of our paper, which concentrates on the theory of finite groups. We say a subgroup $A$ of a group $G$ satisfies the covering and avoidance property of $G$, if for any chief factor $H/K$ of $G$, either $HA=KA$, or $H \cap A = K\cap A$. Such a group $A$ is called a $CAP$-subgroup of $G$. The covering and avoidance property was first introduced by W. Gasch{\"u}tz in \cite{WG}, and there have been a lot of generalizations of the covering and avoidance property. These properties and characterizations of $CAP$-subgroups have been widely researched since they play an important part in the structural study of solvable and  supersolvable groups (for example \cite{EZ}). In recent years, some authors give the structure of $CAP$-subgroups ({{\cite{GS}}}),  while others give some generalizations for the original $CAP$-subgroups. In {{\cite{GG}}}, partial $CAP$-subgroups were introduced as a weaker generalization, by changing any chief factor into a chief series compared to $CAP$-subgroups.   In {{\cite{BL}}},  A. Ballester-Bolinches {\it et al.} introduced a stronger generalization named strong $CAP$-subgroup.   The reason to generalize the original $CAP$-property   is that  it does not satisfy the intermediate property (see \cite[A, Definition 10.8]{DH}).   Simutaneously,  partial $CAP$-subgroup satisfies the intermediate property as well  (see {{\cite{BL2}}}). On the other hand, these subgroups mentioned above enjoy the same properties about the inheritance of quotient group as mentioned in {{\cite[Lemma 2.6]{GG}}} and {{\cite[Lemma 3(2)]{BL}}}.

It is easy to find that only partial $CAP$-subgroup satisfies both the intermediate property and the inheritance property. Our desire is to find a new   generalization of $CAP$-subgroup which   satisfies the two properties, so that it can become a useful tool to characterize the structure of finite groups. As introduced in {{\cite{HW}}}, we have the following definition:
\begin{definition}[{\cite[Definition 1.4]{HW}}]
A subgroup $A$ of a group $G$ is called a {\it $p$-$CAP$-subgroup of $G$} if $A$ covers or avoids every $pd$-chief factor of $G$, where a  $pd$-chief factor is a  chief factor of $G$ with order divisible by $p$.
\end{definition}

It follows from {{\cite[Lemma 2.1]{GQ}}} that $p$-$CAP$-subgroup satisfies the inheritance of quotient groups. However, no evidence shows that $p$-$CAP$-subgroup satisfies the intermediate property. Therefore, we can generalize the definition by strengthening our assumption into the following one.
\begin{definition}
A subgroup $A$ of a group $G$ is called a {\it strong $p$-$CAP$-subgroup of $G$}, if for any subgroup $H$ of $G$ that contains $A$, $A$ is a $p$-$CAP$-subgroup of $H$.
\end{definition}
Now we would like to give an example to show that $p$-$CAP$-subgroups are not strong $p$-$CAP$-subgroups in general, which  shows  the concepts of   strong $p$-$CAP$-subgroups and $p$-$CAP$-subgroups are different.
\begin{example}
Consider the group $G :={\rm SL}(2,5)$. Then there exists a non-normal subgroup ${\rm SL}(2,3)$ of ${\rm SL}(2,5)$, and a subgroup $C_4$ of ${\rm SL}(2,3)$. Since there is only one non-trivial normal subgroup $C_2$ of $G$, we obtain that $C_2 / C_1$ is the only $2d$-chief factor of $G$. As $C_4$ contains $C_2$, it follows that $C_4 \cdot C_1 = C_4 = C_2 \cdot C_4$, and so $C_4$ covers $C_2 / C_1$, which implies that $C_4$ is a $2$-$CAP$-subgroup of $G$. On the other hand, consider the subgroup $C_4$ again in the group ${\rm SL}(2,3)$. One can find that $Q_8 / C_2$ is a $2d$-chief factor of ${\rm SL}(2,3)$. However, since $C_2 \leq C_4 \leq Q_8$, it yields that $Q_8 \cdot C_4 = Q_8 \neq C_4 = C_4 \cdot C_2$ and $C_4 \cap Q_8  = C_4 \neq C_2  = C_4 \cap C_2$, so that $C_4$ is not a $2$-$CAP$-subgroup of ${\rm SL}(2,3)$. Therefore $C_4$ is a  $2$-$CAP$-subgroup of $G$, but not a  strong $2$-$CAP$-subgroup of $G$.

\end{example}
The first aim of our paper is to investigate the $p$-supersolvability of finite group $G$.  Having introduced the definitions of $CAP$-subgroups, partial $CAP$-subgroups and strong $p$-$CAP$-subgroups, we get enough tools to characterize the structure of finite groups. At the beginning of Section \ref{Characterizations for $p$-supersolvability of finite groups}, we obtain that a finite group $G$ is $p$-supersolvable if certain subgroups of a Sylow $p$-subgroup of $G$ are strong $p$-$CAP$-subgroups of $G$. After that, we give a new criteria  for $p$-supersolvability of $G$ under the assumption that all maximal subgroups of a Sylow $p$-subgroup  $S$ are partial $CAP$-subgroups of $G$.
\begin{theorem}\label{characterization for G to be supersolvable partial CAP}
Let $G$ be a finite group and $S$ a Sylow $p$-subgroup of $G$, where $p$ is a prime divisor of $|G|$. If the order of $S$ is larger than $p$, every maximal subgroup  of $S$ is a partial $CAP$-subgroup of $G$, and every
cyclic subgroup of order $4$ of $G$ is a partial $CAP$-subgroup of  $G$ ($  p=2$ and $S$ is
non-abelian). Then $G$ is $p$-supersolvable.
\end{theorem}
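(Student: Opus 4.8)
The plan is to induct on $|G|$, taking $G$ to be a counterexample of minimal order, and to squeeze $G$ into the shape of a primitive soluble group, which is then excluded by a Frattini argument. Two properties of partial $CAP$-subgroups will be used repeatedly: the intermediate property (a partial $CAP$-subgroup of $G$ contained in a subgroup $H$ is a partial $CAP$-subgroup of $H$) and the inheritance property for quotients (images of partial $CAP$-subgroups modulo a normal subgroup are again partial $CAP$-subgroups). I would begin by noting that both hypotheses pass to $G/K$ when $K$ is a normal $p'$-subgroup or a normal $p$-subgroup, and to a normal subgroup of $G$ containing $S$: for the maximal-subgroup condition this is immediate, and for the cyclic-order-$4$ condition it is a short verification using that a minimal normal $p$-subgroup is elementary abelian, which constrains the orders of the $2$-elements involved.

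I would then run the usual reductions. If $O_{p'}(G) \neq 1$, then $G/O_{p'}(G)$ inherits the hypotheses and is $p$-supersolvable by minimality; since the chief factors inside $O_{p'}(G)$ are $p'$-groups, $G$ is $p$-supersolvable, a contradiction, so $O_{p'}(G) = 1$. Next I would show every minimal normal subgroup $N$ of $G$ is a $p$-group: here $p \mid |N|$ (otherwise $N \leqslant O_{p'}(G) = 1$), and one analyses how a maximal subgroup $P_1$ of $S$ (which exists since $|S| > p$) covers or avoids the chief factor into which $N$ fits in a chief series for which $P_1$ is covering/avoiding. If every such $P_1$ avoids it, one deduces $S \leqslant N$: then $N \trianglelefteq G$ contains $S$, the theorem applies to $N$ by the intermediate property when $N < G$ and forces the characteristically simple group $N$ to be $p$-supersolvable, hence an elementary abelian $p$-group; and $N = G$ is impossible, since then $G$ is non-abelian simple (as $|S| > p$ rules out $G$ cyclic) and a proper non-trivial $p$-subgroup $P_1$ of $S$ can neither cover nor avoid the unique chief factor $G$. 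So every minimal normal subgroup of $G$ is an elementary abelian $p$-group. Having this, I would reduce to a unique minimal normal subgroup $N$ (if $N_1 \neq N_2$, then $G$ is a subdirect product of the $p$-supersolvable groups $G/N_1, G/N_2$, hence $p$-supersolvable, since $p$-supersolvability is a formation) and then to $\Phi(G) = 1$ (otherwise $N \leqslant \Phi(G)$ and $G/\Phi(G)$ is $p$-supersolvable by minimality, hence so is $G$ by saturation).

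With $\Phi(G) = 1$ and $N$ the unique minimal normal subgroup — an elementary abelian $p$-group — Gasch{\"u}tz's theorem provides a complement $M$, giving $G = N \rtimes M$ with $M$ maximal and $N = C_G(N) = O_p(G)$; in particular $N \leqslant S$, and we may take $S = N \rtimes S_0$ with $S_0 = S \cap M \in \mathrm{Syl}_p(M)$. Since $N$ is now the unique minimal normal subgroup, every chief series of $G$ starts $1 < N < \cdots$, so each maximal subgroup $P_1$ of $S$ covers or avoids $N$, i.e. $N \leqslant P_1$ or $N \cap P_1 = 1$. If some $P_1$ avoids $N$, then $|N| = |N : N \cap P_1|$ divides $|S : P_1| = p$, so $|N| = p$; but then $G/N = G/C_G(N)$ embeds in $\operatorname{Aut}(\mathbb{Z}/p\mathbb{Z})$, a $p'$-group, forcing $|S| = p$ and contradicting $|S| > p$. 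Hence $N$ lies in every maximal subgroup of $S$, i.e. $N \leqslant \Phi(S)$. But $S = \langle S_0, N \rangle$ and $N \leqslant \Phi(S)$ consists of non-generators of $S$, so $S = \langle S_0 \rangle = S_0 \leqslant M$, whence $N \leqslant N \cap M = 1$ — the final contradiction, completing the proof.

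The step I expect to be the principal obstacle is the second reduction: proving that an arbitrary minimal normal subgroup is a $p$-group requires handling the case in which the relevant maximal subgroup of $S$ \emph{covers} (rather than avoids) the chief factor cut out by $N$ — a merely partial $CAP$-subgroup need only respect one chief series, so some care is needed to extract usable information in that case — and, alongside this, the verification that the cyclic-order-$4$ hypothesis genuinely survives passage to the quotients appearing in the reductions; it is exactly this hypothesis that blocks $\mathrm{SL}(2,3)$-type sections (compare the $\mathrm{SL}(2,5)$ example above) from surviving the reductions when $p = 2$.
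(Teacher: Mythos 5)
Your endgame (unique minimal normal subgroup $N$, $\Phi(G)=1$, Gasch\"utz complement, $N\le\Phi(S)$ forcing $N=1$) is essentially the same contradiction the paper reaches, but the proof's real weight lies in the reduction you yourself flag as ``the principal obstacle,'' and that reduction is not carried out. The paper does not prove $p$-solvability from scratch: it invokes \cite[Theorem C]{BL2}, which says precisely that the stated hypotheses (maximal subgroups of $S$ partial $CAP$, plus the order-$4$ condition) force $G$ to be $p$-solvable, and only then runs the minimal-counterexample argument inside the class of $p$-solvable groups. Your substitute argument for ``every minimal normal subgroup is a $p$-group'' does not work as sketched: if a maximal subgroup $P_1$ of $S$ \emph{avoids} the chief factor $G$-isomorphic to $N/1$ in its witnessing series, one gets $P_1\cap N=1$ and hence only $|N|_p\le|S:P_1|=p$, not $S\le N$; a non-abelian minimal normal subgroup with Sylow $p$-subgroup of order $p$ (e.g.\ a product of copies of ${\rm PSL}(2,q)$) is not excluded by this. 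The covering case you explicitly leave open. Since a partial $CAP$-subgroup is only required to respect one chief series, and that series need not pass through a prescribed minimal normal subgroup before you have uniqueness, this step genuinely needs the external theorem (or a comparably heavy argument).

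A secondary consequence of the missing $p$-solvability: your induction applies the theorem to quotients $G/N$ with $N$ a minimal normal $p$-subgroup, but the theorem's hypothesis requires the Sylow $p$-subgroup to have order exceeding $p$. If $|S/N|\le p$ you cannot invoke minimality, and without knowing $G/N$ is $p$-solvable you cannot conclude it is $p$-supersolvable (a group with Sylow $p$-subgroup of order $p$ need not be $p$-supersolvable). In the paper this degenerate case is harmless exactly because $p$-solvability is already in hand. To repair your proof, either cite \cite[Theorem C]{BL2} as the paper does, or supply a genuine argument excluding non-abelian chief factors of order divisible by $p$; the rest of your outline then goes through and is essentially the paper's argument.
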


Now we introduce the  second part of our paper, which concentrates on the fusion systems. Let $S$ be a $p$-group. In \cite{PUI06}, a category $\mathcal{F}$ named “Frobenius category” was firstly introduced by L. Puig. That is, the category $\mathcal{F}$ whose objects ${\rm Ob} (\mathcal{F})$ consist of all subgroups of $S$, and certain homomorphisms under subgroups of $S$ satisfying some axioms (see {\cite[Definition 4.1]{CR}). Its more renowned name is  fusion system}. Furthermore, $\mathcal{F}$ is said to be  saturated, if it additionally satisfies certain axioms (see {\cite[Definition 4.11]{CR}). For any finite group $G$ and a Sylow $p$-subgroup $S$ of $G$, a natural saturated fusion system namely $\mathcal{F}_S (G)$ is defined (see {\cite[Definition 1.7]{CR}}). The   notations of our second part can all be found in the book \cite{CR}, and readers can refer to this book for any symbols that we have not explicitly explained. 

The  second aim of our paper concentrates on researching a special type of saturated fusion system, namely {\it supersolvable fusion system}, which was firstly introduced by N. Su in \cite{SN}.
\begin{definition}[{\cite[Definition 1.2]{SN}}]
Let $\mathcal{F}$ be a fusion system over $S$. Then $\mathcal{F}$ is said to be {\it supersolvable}, if there exists a series 
$$1 = Q_0 \leq Q_1 \leq \cdots \leq Q_n = Q,$$
where $Q_i$ is strongly $\mathcal{F}$-closed (see {\cite[Definition 4.55 (ii)]{CR}}), such that $Q_{i+1} / Q_i$ is cyclic for all $i$.
\end{definition}
\begin{remark}
There exists a strong relationship between the fusion system $\mathcal{F}_S (G)$ and the finite group $G$, where $S$ is a Sylow $p$-subgroup of $G$. To be more precise, if $G$ is $p$-supersolvable, then any $p$-chief factor  of $G$ must has order $p$. Therefore, consider the chief series $1 \leq G_1 \leq G_2 \leq \cdots \leq G_n = G$, it follows that $G_{i+1} / G_i$ possesses a cyclic Sylow $p$-subgroup of order $p$ or 1  for all $i$. Applying {\cite[Theorem 1.4]{SN}}, we conclude that $\mathcal{F}_S (G)$ is supersolvable. On the other hand, suppose that $\mathcal{F}_S (G)$ is supersolvable. Then there exists a $p$-supersolvable group $\tilde{G}$, where $S \in {\rm Syl}_p (\tilde{G})$, such that $\mathcal{F}_S (G) = \mathcal{F}_S (\tilde{G})$ (see {\cite[Proposition 1.3]{SN}}).  However, $G$ is not $p$-supersolvable in general. For example, the simple group of Lie type $G : = {\rm PSL}(2,p^q)$ has a  cyclic Sylow $p$-subgroup $S$, where $(2,p) = 1$. Obviously, $G$ is not $p$-supersolvable. Meanwhile, since $S$ is strongly $\mathcal{F}_S (G)$-closed, we get that $S/1$ is cyclic while $S,1$ are strongly $\mathcal{F}_S (G)$-closed, so that $\mathcal{F}_S (G)$ is supersolvable.

\end{remark}
The supersolvable saturated fusion systems on finite
$p$-groups are exactly the $p$-fusion systems of supersolvable finite groups (see {\cite[Proposition 1.3]{SN}}), and more profound results have been obtained by F. Aseeri and J. Kaspczyk \cite{FJ}, for example. The second part of our paper follows the patterns of   \cite{FJ}, and aims at the criteria for   fusion systems of finite groups  $\mathcal{F}_S (G)$ to be supersolvable. As a result, we  obtain several characterizations for fusion system $\mathcal{F}_S (G)$ to be supersolvable under the assumptions that certain subgroups of $S$ are strong  $CAP$-subgroups or strong $p$-$CAP$-subgroups of $G$. In particular, we give the following   characterizations for fusion system $\mathcal{F}_S (G)$ to be supersolvable under the hypothesis that certain subgroups of $S$ are strong $CAP$-subgroups of $G$.
\begin{theorem}\label{characterization for FSG strong CAP}
Let $G$ be a finite group, $p$ be a prime divisor of $|G|$, and $S$ be a Sylow $p$-subgroup of $G$. Suppose that there exists a subgroup $D$ of $S$ with order $1 < |D| < |S|$ such that every subgroup of $S$ with order $|D|$ and every cyclic subgroup of $S$ with order  $4$ (if $S$ is non-abelian and $|D|=p=2$) is a strong $CAP$-subgroup of $G$. Then $\mathcal{F}_S (G)$  is supersolvable.
\end{theorem}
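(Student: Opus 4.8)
The plan is to deduce the assertion from the corresponding $p$-supersolvability criterion for the group $G$ itself and then to invoke the Remark above. The starting observation is that a strong $CAP$-subgroup of $G$ is automatically a strong $p$-$CAP$-subgroup of $G$: if $A$ is a $CAP$-subgroup of every subgroup $H$ with $A\le H\le G$, then in each such $H$ it in particular covers or avoids every $pd$-chief factor, so $A$ is a $p$-$CAP$-subgroup of $H$. Consequently the hypothesis — that every subgroup of $S$ of order $|D|$, together with every cyclic subgroup of order $4$ in the exceptional case $p=2$, $S$ non-abelian, $|D|=p=2$ — is a special case of the hypothesis of the strong $p$-$CAP$-criterion for $p$-supersolvability established in Section \ref{Characterizations for $p$-supersolvability of finite groups}, and that criterion gives that $G$ is $p$-supersolvable. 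Since a $p$-supersolvable group has all of its $p$-chief factors of order $p$, a chief series of $G$ satisfies the hypotheses of \cite[Theorem 1.4]{SN}, whence $\mathcal{F}_S(G)$ is supersolvable exactly as recorded in the Remark.

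For the substance behind the $p$-supersolvability criterion I would argue by induction on $|G|$. Using \cite[Lemma 3(2)]{BL} (strong $CAP$-subgroups map to strong $CAP$-subgroups of quotients) together with the fact that $G$ is $p$-supersolvable if and only if $G/O_{p'}(G)$ is, one reduces to the case $O_{p'}(G)=1$. The first claim is that $G$ has no non-abelian minimal normal subgroup: if $N$ were one, then $p\mid|N|$ (else $N\le O_{p'}(G)=1$), and a subgroup $H\le S$ of order $|D|$ chosen inside $S\cap N$ when $|D|\le|S\cap N|$, or chosen to contain $S\cap N$ when $|D|>|S\cap N|$, would neither cover nor avoid the chief factor $N/1$, contradicting that $H$ is a $CAP$-subgroup of $G$. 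Hence every minimal normal subgroup $N$ of $G$ is an elementary abelian $p$-group lying in $O_p(G)\le S$. A similar covering/avoidance argument, applied to a subgroup of $N$ of order $|D|$, gives $|N|\le|D|$, and the key point is that in fact $|N|=p$: if $|N|=p^a$ with $a\ge 2$ then, since $|N|<|S|$, one can choose an $S$-invariant subgroup $M\le N$ with $1<|M|<|N|$ and then a subgroup $H\le S$ of order $|D|$ with $M\le H$ but $N\not\le H$, so that $H$ again neither covers nor avoids $N/1$, a contradiction. With $|N|=p$ one passes to $G/N$, checks that the hypotheses (including the order-$4$ clause when $p=2$) survive this passage, and applies induction; pulling back a chief series of $G/N$ through the chief factor $N$ then shows that $G$ is $p$-supersolvable.

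The main obstacle is the step forcing $|N|=p$ — more precisely, the construction of an auxiliary subgroup of order exactly $|D|$ that straddles a given large elementary abelian $p$-chief factor of $G$. When the subgroup lattice of $S$ is rigid (for instance when a relevant section of $S$ is cyclic, or is generalized quaternion in the case $p=2$), no straddling subgroup of the prescribed order need exist, and these are exactly the configurations in which the exceptional hypothesis on cyclic subgroups of order $4$, or the extremal choice $|D|=p$, has to be brought in and analysed separately. A secondary technical point is to verify, when one quotients by $N$ of order $p$, that the order condition $1<|D|<|S|$ and the order-$4$ clause are inherited correctly, so that the inductive hypothesis genuinely applies to $G/N$.
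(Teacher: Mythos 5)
Your reduction at the end is sound: a strong $CAP$-subgroup is indeed a strong $p$-$CAP$-subgroup, and if one knew that $G$ itself were $p$-supersolvable, then the Remark (via \cite[Theorem 1.4]{SN}) would give supersolvability of $\mathcal{F}_S(G)$. The gap is that the ``strong $p$-$CAP$-criterion for $p$-supersolvability established in Section \ref{Characterizations for $p$-supersolvability of finite groups}'' that you invoke does not exist for a general order $|D|$: Theorem \ref{characterization for G minimal order} requires \emph{cyclic subgroups of order $p$ or $4$} to be strong $p$-$CAP$-subgroups, and Theorem \ref{characterization for G to be supersolvable partial CAP} requires $|D|=|S|/p$ (maximal subgroups of $S$). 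Neither covers an arbitrary $1<|D|<|S|$, so you are in fact proposing to prove a new and strictly stronger theorem ($G$ itself $p$-supersolvable), and your second paragraph is the entire proof of that theorem, not a side remark. The two steps you flag as ``obstacles'' are precisely where all the difficulty lives, and they are not resolved: (a) a subgroup of order exactly $|D|$ straddling a non-cyclic $p$-chief factor $N/1$ need not exist (e.g.\ when the relevant section of $S$ is cyclic, or when $|D|\ge |N|$ and every order-$|D|$ subgroup meeting $N$ nontrivially already contains $N$); in the paper's parallel result, Theorem \ref{main result for characterization of hypercentre}, handling exactly this configuration requires Thompson critical subgroups, $\Omega$-subgroups, and for $p=2$ the \emph{additional} hypothesis $\exp(P)\le 2$ --- a hypothesis that Theorem \ref{characterization for FSG strong CAP} does not assume, so that argument cannot simply be transplanted; (b) the descent of the order-$|D|$ hypothesis to $G/N$ fails in general when $|N|=|D|$ or when cyclic subgroups of order $p^2$ sit over $N$ (a subgroup of $S/N$ of order $|D|$ need not be the image of a subgroup of order $|D|$). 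Moreover, your argument controls only chief factors inside $O_p(G)$; nothing in the sketch addresses $p$-chief factors above $O_p(G)$, which is what separates ``$O_p(G)\le Z_{\mathfrak U}(G)$'' from ``$G$ is $p$-supersolvable.''

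The paper's proof avoids all of this by never claiming $p$-supersolvability of $G$. It runs a minimal-counterexample argument on the fusion system: every $Q\in\mathcal{E}_{\mathcal F}^{*}$ is $\mathcal{F}$-centric and hence has order at least $p|D|$, which forces $|O_p(G)|\ge p|D|$; then it observes that a strong $CAP$-subgroup is $\mathfrak U$-embedded in the sense of \cite{WS}, so that Lemma \ref{lemma GWB} (= \cite[Proposition 3.1]{WS}) yields $O_p(G)\le Z_{\mathfrak U}(G)$ --- note this external result exploits covering/avoidance of \emph{all} chief factors, not just the $pd$-ones, which is exactly what spares the paper the $\exp\le 2$ condition; finally Lemmas \ref{fusion system 1} and \ref{fusion system 2} (from \cite{FJ}) assemble the conclusion. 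If you want to salvage your route, you would need to either prove the general-$|D|$ group-theoretic criterion in full (essentially redoing Theorem \ref{main result for characterization of hypercentre} without the exponent hypothesis, using the full $CAP$ property as \cite{WS} does) or switch to the fusion-theoretic bound on $O_p(G)$ as the paper does.
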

\begin{theorem}\label{main theorem}
Let $G$ be a finite group, $p$ be a prime divisor of $|G|$, and $S$ be a Sylow $p$-subgroup of $G$. Suppose that there exists a subgroup $D$ of $S$ with order $1 < |D| < |S|$ such that one of the following holds:
\begin{itemize}
\item[(1)] $p$ is odd, and every subgroup of $S$ with order $|D|$ is a strong $p$-$CAP$-subgroup of $G$;
\item[(2)] $p = 2$, every subgroup of $S$ with order $|D|$ is a strong $2$-$CAP$-subgroup of $G$, ${\rm exp} (S) \leq 2$, and  every cyclic subgroup of $S$ with order  $4$ is a strong $2$-$CAP$-subgroup of $G$ if $S$ is non-abelian and $|S| /2>|D|=p=2$.
\end{itemize} 
 Then $\mathcal{F}_S (G)$  is supersolvable.
\end{theorem}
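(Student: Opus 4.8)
The plan is to argue by induction on $|G|$, reducing the statement to the $p$-supersolvability results obtained at the beginning of Section~\ref{Characterizations for $p$-supersolvability of finite groups} together with the principle --- recorded in the Remark above and resting on \cite[Proposition 1.3]{SN} and \cite[Theorem 1.4]{SN} --- that $\mathcal{F}_S(G)$ is supersolvable as soon as every chief factor of $G$ has a cyclic (or trivial) Sylow $p$-subgroup. Since $1<|D|<|S|$ we have $|S|>p$. Because $\mathcal{F}_S(G)=\mathcal{F}_{SO_{p'}(G)/O_{p'}(G)}\big(G/O_{p'}(G)\big)$ and $SO_{p'}(G)/O_{p'}(G)\cong S$, and because being a strong $p$-$CAP$-subgroup passes to quotient groups --- a subgroup of $SO_{p'}(G)/O_{p'}(G)$ of order $|D|$ lifts to a subgroup of $S$ of order $|D|$, one applies \cite[Lemma 2.1]{GQ} inside every overgroup, and the conditions ${\rm exp}(S)\le 2$ and ``every cyclic subgroup of order $4$ is a strong $2$-$CAP$-subgroup'' are manifestly preserved --- we may assume throughout that $O_{p'}(G)=1$.

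With $O_{p'}(G)=1$, fix a minimal normal subgroup $N$ of $G$; then $p\mid |N|$, so $N/1$ is a $pd$-chief factor of $G$, and every subgroup $A$ of $S$ with $|A|=|D|$, being a strong $p$-$CAP$-subgroup and hence a $p$-$CAP$-subgroup of $G$, satisfies $N\le A$ or $A\cap N=1$. If $N$ were non-abelian then $N\not\le A$ for every such $A$, as $A$ is a $p$-group while $N$ is not; choosing $A$ to contain $\langle x\rangle$ for some $x\in S\cap N$ of order $p$ (possible since $S\cap N\ne 1$ and $p\le |D|<|S|$) yields $A\cap N\ne 1$, a contradiction. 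Hence $N$ is an elementary abelian $p$-group, so $N\le O_p(G)\le S$; and a covering/avoidance count inside the $p$-group $S$ --- comparing $N$ with a subgroup $B$ of $S$ of order $p|N|$ containing it, and examining the subgroups of $S$ of order $|D|$ that meet $N$ non-trivially without containing it --- forces $|N|=p$. Thus every minimal normal subgroup of $G$ is cyclic of order $p$, and $O_p(G)\ne 1$. The inductive hypothesis is now applied not to $G/N$ directly (the common order $|D|$ of the test subgroups does not descend through that quotient) but to suitable proper subgroups of $G$ that contain $S$, and, in combination with the $p$-supersolvability results of Section~\ref{Characterizations for $p$-supersolvability of finite groups}, to $G/N$; this shows that every chief factor of $G$ is a $p'$-group, a group of order $p$, or a non-abelian simple group with cyclic Sylow $p$-subgroup, so that in every case the Sylow $p$-subgroup of the factor is cyclic or trivial and \cite[Theorem 1.4]{SN} finishes the argument. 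When $p=2$ and ${\rm exp}(S)\le 2$, the group $S$ is elementary abelian, so that every subgroup of $S$ is elementary abelian and the counts above simplify.

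The heart of the matter --- and the main obstacle --- is this control of $G$ above $O_p(G)$ in the absence of a clean descent of the order $|D|$: one must rule out a non-abelian chief factor $H/K$ of $G$ whose Sylow $p$-subgroup is non-cyclic. The leverage is that, inside $H$ --- an overgroup of a subgroup of $S\cap H$ of order $|D|$, or of $S\cap H$ itself when $|S\cap H|\le |D|$ --- the strong $p$-$CAP$ hypothesis, applied together with the covering/avoidance dichotomy on the chief factor $H/K$ lying in $H$, bounds $|H/K|_p$ by $|D|$ and forces it to be cyclic, whence \cite[Theorem 1.4]{SN} still applies. Making this precise --- treating direct products of isomorphic non-abelian simple groups, keeping track of the interaction with the fixed subgroup $D$, and, for odd $p$, possibly invoking the classification of finite simple groups with a cyclic Sylow $p$-subgroup --- is where the bulk of the technical work lies.
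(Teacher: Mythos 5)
There is a genuine gap. Your proposal carries out only the easy reductions (passing to $G/O_{p'}(G)$, showing a minimal normal subgroup is an elementary abelian $p$-group) and then explicitly defers the decisive step --- ruling out a non-abelian chief factor of $G$ whose Sylow $p$-subgroup is non-cyclic, i.e.\ controlling $G$ above $O_p(G)$ --- with the admission that ``making this precise \dots is where the bulk of the technical work lies.'' That step is not a technicality: the covering/avoidance dichotomy applied to a non-abelian $pd$-chief factor $H/K$ only yields that either some order-$|D|$ subgroup covers it (whence $|H/K|\le |D|$) or that $\Omega_1(S\cap H)\le K$, neither of which forces $(S\cap H)/(S\cap K)$ to be cyclic without substantial further argument (and, as you note, possibly CFSG input). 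Likewise, ``applying the inductive hypothesis to suitable proper subgroups of $G$ that contain $S$'' is vacuous when $S$ is contained in no proper subgroup other than itself, and the claim that a counting argument ``forces $|N|=p$'' is not justified for $p<|N|\le |D|$ (the paper needs a full induction with Frattini-subgroup arguments, Steps 3--6 of Theorem \ref{main result for characterization of hypercentre}, to handle exactly this). Your appeal to \cite[Theorem 1.4]{SN} at the end therefore rests on an unproved classification of the chief factors.

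The paper's proof avoids all of this by a different mechanism: it inducts on $|G|$ and uses the Aseeri--Kaspczyk lemmas. Every $\mathcal{F}$-essential (or equal to $S$) subgroup $Q$ must be $\mathcal{F}$-centric, hence $|Q|\ge p|D|$; if no such $Q$ is normal in $G$, Lemma \ref{fusion system 1} finishes via the minimality of $G$ applied to the normalizers, and otherwise $|O_p(G)|\ge p|D|$, so Theorem \ref{main result for characterization of hypercentre} gives $O_p(G)\le Z_{\mathfrak{U}}(G)$ and Lemma \ref{fusion system 2} concludes. In other words, the structure of $G$ above $O_p(G)$ is never analyzed directly --- it is absorbed into the supersolvability of the fusion systems $\mathcal{F}_{S\cap H}(H)$ of proper subgroups $H$ with $O_p(G)<S\cap H$, which is exactly what your outline is missing a substitute for. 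To repair your approach you would need either to prove the chief-factor classification you assert or to import the essential-subgroup reduction; as written, the proof is incomplete at its central point.
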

For the case that $p$ is odd, one can see from the fact a strong $CAP$-subgroup is a strong $p$-$CAP$-subgroup that Theorem \ref{main theorem} generalizes Theorem \ref{characterization for FSG strong CAP}. However, for the case that $p=2$, it is difficult to tell which one is better.
\section{Preliminaries}
In this section, several important lemmas and theorems are introduced.  Lemma \ref{inheritance} showcases the intermediate and inheritance properties. Theorems \ref{hypercentre} and \ref{main result for characterization of hypercentre} play a  vital role in our characterizations for $G$ to be $p$-supersolvable or $\mathcal{F}_S (G)$ to be supersolvable under the assumptions that certain subgroups of $G$ are strong $p$-$CAP$-subgroups of $G$. Lemmas \ref{fusion system 1} and \ref{fusion system 2} are basic observations about supersolvability of fusion systems. 
\begin{lemma}\label{inheritance}
Let $H$ be a subgroup of $G$ and $N$ a normal subgroup of $G$. Then the following statements are true.
\begin{itemize}
\item[(1)]  $N$ is a strong $p$-$CAP$-subgroup of $G$.
\item[(2)] If $H$ is a strong $p$-$CAP$-subgroup of $G$, then   $H$ is a strong $p$-$CAP$-subgroup for any subgroup $K$ of $G$ such that  $K \geq H$.
\item[(3)] If $H$ is a $p$-$CAP$-subgroup of $G$, then  $HN/N$ is a  $p$-$CAP$-subgroup of $G/N$.
\item[(4)] If $H$ is a strong $p$-$CAP$-subgroup of $G$, then $HN/N$ is a strong $p$-$CAP$-subgroup of $G/N$. 
\end{itemize}
\end{lemma}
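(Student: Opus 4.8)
The plan is to treat the four parts in the stated order, since parts (1)--(3) are self-contained and part (4) then follows by merging the bookkeeping of part (2) with the computation of part (3).

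For part (1), I would invoke the classical fact that a normal subgroup covers or avoids every chief factor, applied inside each overgroup of $N$. Concretely, fix $N \le H \le G$ and a chief factor $X/Y$ of $H$. Since $N \unlhd H$, the subgroup $(N \cap X)Y$ is normal in $H$ and lies between $Y$ and $X$, so minimality of the chief factor forces $(N \cap X)Y = Y$ or $(N \cap X)Y = X$. In the first case $N \cap X \le Y$, hence $N \cap X = N \cap Y$ and $N$ avoids $X/Y$; in the second case $X \le NY$, hence $NX = NY$ and $N$ covers $X/Y$. Thus $N$ covers or avoids every chief factor, in particular every $pd$-chief factor, of every $H$ with $N \le H \le G$, which is precisely the statement that $N$ is a strong $p$-$CAP$-subgroup of $G$. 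Part (2) is then just unwinding the definition: if $K \ge H$ and $M$ is any subgroup of $K$ with $M \ge H$, then $M$ is also a subgroup of $G$ with $M \ge H$, so the hypothesis that $H$ is a strong $p$-$CAP$-subgroup of $G$ gives that $H$ is a $p$-$CAP$-subgroup of $M$; as $M$ was arbitrary, $H$ is a strong $p$-$CAP$-subgroup of $K$.

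For part (3) I would either cite \cite[Lemma 2.1]{GQ} outright or reprove it in a few lines using the isomorphism $(X/N)\big/(Y/N) \cong X/Y$ for $N \le Y \unlhd X \unlhd G$. A cover $HX = HY$ of $X/Y$ pushes forward to $(HN/N)(X/N) = HXN/N = HYN/N = (HN/N)(Y/N)$, so $HN/N$ covers the corresponding $pd$-chief factor of $G/N$. An avoidance $H \cap X = H \cap Y$ is handled by showing $HN \cap X = HN \cap Y$: if $hn \in HN \cap X$ with $h \in H$ and $n \in N \le Y \le X$, then $h = (hn)n^{-1} \in X$, so $h \in H \cap X = H \cap Y \le Y$, whence $hn \in YN = Y$; the reverse inclusion is trivial. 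Hence $(HN/N) \cap (X/N) = (HN/N) \cap (Y/N)$ and $HN/N$ avoids. Finally, part (4) combines the two previous ideas: given a subgroup $\overline{M} \ge HN/N$ of $G/N$, write $\overline{M} = M/N$ with $H \le HN \le M \le G$; since $H$ is a strong $p$-$CAP$-subgroup of $G$ and $M \ge H$, $H$ is a $p$-$CAP$-subgroup of $M$, and part (3) applied inside $M$ with the normal subgroup $N \unlhd M$ yields that $HN/N$ is a $p$-$CAP$-subgroup of $M/N = \overline{M}$. As $\overline{M}$ was arbitrary, $HN/N$ is a strong $p$-$CAP$-subgroup of $G/N$.

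The only step that needs genuine care --- and hence the closest thing to an obstacle --- is the avoidance half of part (3): one must verify $HN \cap X = HN \cap Y$ rather than the weaker-looking $(H \cap X)N = (H \cap Y)N$, and this is exactly the place where the hypothesis $N \le Y$ (automatic because $Y/N$ is a subgroup of $G/N$) is used. All the remaining content is routine manipulation of the definition of ``strong''.
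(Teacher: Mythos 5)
Your proposal is correct and follows essentially the same route as the paper: part (2) by unwinding the definition, part (3) by the cover/avoid correspondence for chief factors above $N$ (your element-chase for the avoidance case is equivalent to the paper's use of the Dedekind law $N(H\cap U)=NH\cap U$), and part (4) by combining (2) and (3) over all overgroups $M/N$ of $HN/N$. The only cosmetic difference is that for part (1) you reprove the classical ``normal subgroups cover or avoid chief factors'' fact directly, where the paper simply cites \cite[Lemma 2.1(a)]{GQ}.
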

\begin{proof}
(1) It follows from $N \unlhd K$ for any subgroup $K$ of $G$ containing  $N$ and {{\cite[Lemma 2.1(a)]{GQ}}} that $N$ is a $p$-$CAP$-subgroup for $K$. Hence $N$ is a strong $p$-$CAP$-subgroup of $G$.

(2) Since $H$ is a strong $p$-$CAP$-subgroup of $G$, $H$ is a $p$-$CAP$-subgroup for any subgroup $R$ of $G$ containing $H$. Therefore $H$ is a $p$-$CAP$-subgroup for  any subgroup $R$ of $K$ that contains $H$. Hence we conclude that $H$ is a strong $p$-$CAP$-subgroup of $K$.

(3)  Let  $(U/N) / (V/N)$ be an arbitrary  chief factor of $G/N$ such that $|(U/N) / (V/N)|$ is a multiple of $p$. Then  $U/V$ is a  chief factor of $G$ such that $U,V \geq N$ and $|U/V|$ is a multiple of $p$. Therefore  $H$ covers or avoids this chief factor. Suppose that $H U = HV$, then we get that $HN/N \cdot U/N = H U/N = HV/N = HN/N \cdot V/N$, so that $HN/N $ covers $(U/N) / (V/N)$.  Suppose on the other hand that $H \cap V = H \cap U$. Then we obtain from the fact $NH \cap U = N (H \cap U) = N (H \cap V) = HN \cap V$ that $NH/N  \cap U/N = HN/N \cap V/N$, so that $HN/N$ avoids $(U/N) / (V/N)$. Hence  we conclude from the choice of $(U/N) / (V/N)$ that $HN/N$ is a $p$-$CAP$-subgroup of $G/N$. 

(4) Since $H$ is a strong $p$-$CAP$-subgroup of $G$, $H$ is a $p$-$CAP$-subgroup of any subgroup $K$ of $G$ that contains $HN$.  Therefore for any subgroup $K$ of $G$ such that $K$ contains $HN$, we obtain from Lemma \ref{inheritance} (3) that $HN/N$ is a $p$-$CAP$-subgroup of $K/N$. By the choice of $K$, we conclude that $HN/N$ is a strong $p$-$CAP$-subgroup of $G/N$, as required. 
\end{proof}
According to {{\cite[Section 9.5]{Ro}}}, a class of finite groups $\mathfrak{F}$ is said to be a {\it formation} if every image of an $\mathfrak{F}$-group is an $\mathfrak{F}$-group and if $G/(N_1 \cap N_2)$ belongs to $\mathfrak{F}$ whenever $G/N_1$ and $G/N_2$ belong to $\mathfrak{F}$. A formation $\mathfrak{F}$ is said to be {\it saturated} if a finite group $G \in \mathfrak{F}$ whenever $G/\Phi (G)$. 

Denote  by $\mathfrak{U}$   the class of all supersolvable groups and by $\mathfrak{U_p}$   the class of all $p$-supersolvable groups, where $p$ is a prime. Both $\mathfrak{U}$ and $\mathfrak{U_p}$ are saturated formations, and we have $\mathfrak{U_p} \supseteq \mathfrak{U}$.

Let $\mathfrak{H}$ be a non-empty class of groups. According to {{\cite[Definition 1.2.9 and Definition 2.3.18]{BB}}}, a chief factor $H/K$ of a group $G$ is  called {\it $\mathfrak{H}$-central} in $G$ if $[H/K]\ast G$ belongs to $\mathfrak{H}$, where $[H/K] \ast G$ is the semi-direct product  $[H/K](G/C_G(H/K))$ if $H/K$ is abelian and $G/C_G(H/K)$ if $H/K$ is non-abelian. A normal subgroup $N$ of a group $G$ is said to be {\it $\mathfrak{H}$-hypercentral} in $G$ if every chief factor of $G$ below $N$ is $\mathfrak{H}$-central in $G$. By the generalised Jordan-H\"older Theorem, we conclude that the product of $\mathfrak{H}$-hypercentral normal subgroups of a group $G$ is $\mathfrak{H}$-hypercentral in $G$ as well. Thus every group $G$ possesses a unique maximal normal $\mathfrak{H}$-hypercentral subgroup namely the $\mathfrak{H}$-hypercentre of $G$ which is
denoted by $Z_{\mathfrak{H}} (G)$. Applying the generalised Jordan-H\"older Theorem again,  every chief factor of $G$ below $Z_{\mathfrak{H}} (G)$ is $\mathfrak{H}$-central in $G$.

\begin{theorem}\label{hypercentre}
Let $p$ be a prime and $P$ be a normal $p$-subgroup of $G$. If every cyclic subgroup of $P$ of prime order or $4$ is a strong $p$-$CAP$-subgroup of $G$, then $P \leq Z_{\mathfrak{U}} (G)$, where $Z_{\mathfrak{U}} (G)$ denotes the $\mathfrak{U}$-hypercentre of $G$.
\end{theorem}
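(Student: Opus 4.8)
The plan is to induct on $|G|$, handling first the case in which the exponent of $P$ is tightly controlled---the technical core---and then reducing the general case to it by replacing $P$ with a suitable characteristic subgroup arising from a Thompson critical subgroup.

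\textbf{Step 1: the case ${\rm exp}(P)\le p$ when $p$ is odd, and ${\rm exp}(P)\le 4$ when $p=2$.} Here I would take a counterexample $(G,P)$ with $|G|$ minimal. Then $P\neq 1$, so $G$ has a minimal normal subgroup $N\le P$; choosing $1\neq x\in N$, the subgroup $\langle x\rangle$ has order $p$, so it is a strong $p$-$CAP$-subgroup of $G$, in particular a $p$-$CAP$-subgroup of $G$. As $N/1$ is a $pd$-chief factor of $G$, $\langle x\rangle$ covers or avoids it: avoidance would give $\langle x\rangle=\langle x\rangle\cap N=1$, and covering gives $N\le\langle x\rangle$, so $|N|=p$ and hence $N\le Z_{\mathfrak{U}}(G)$. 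If $P=N$ we are done, so assume $P/N\neq 1$. The pair $(G/N,P/N)$ again satisfies the hypotheses of this step: $P/N$ is a normal $p$-subgroup of $G/N$ with ${\rm exp}(P/N)\le{\rm exp}(P)$, and if $\langle xN\rangle\le P/N$ is cyclic of prime order or $4$ (with $x\in P$), the exponent bound forces $x^p=1$ (for $p$ odd) or $x^4=1$ (for $p=2$), so $H:=\langle x\rangle$ is a cyclic subgroup of $P$ of prime order or $4$, thus a strong $p$-$CAP$-subgroup of $G$ by hypothesis; since $HN/N=\langle xN\rangle$, Lemma~\ref{inheritance}(4) then shows $\langle xN\rangle$ is a strong $p$-$CAP$-subgroup of $G/N$. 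By minimality of $|G|$ we get $P/N\le Z_{\mathfrak{U}}(G/N)$, and since $N\le Z_{\mathfrak{U}}(G)$ we have $Z_{\mathfrak{U}}(G/N)=Z_{\mathfrak{U}}(G)/N$, whence $P\le Z_{\mathfrak{U}}(G)$, a contradiction.

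\textbf{Step 2: the general case.} I would let $C$ be a Thompson critical subgroup of $P$ (so $C$ is characteristic in $P$ with $\Phi(C)\le Z(C)$, $[P,C]\le Z(C)$ and $C_P(C)=Z(C)$) and let $W$ be the subgroup of $C$ generated by the elements of order dividing $p$ if $p$ is odd, and dividing $4$ if $p=2$. Then $W$ is characteristic in $P$, hence a normal $p$-subgroup of $G$, and the standard properties of critical subgroups give ${\rm exp}(W)=p$ (resp.\ ${\rm exp}(W)\mid 4$). Every cyclic subgroup of $W$ of prime order or $4$ is such a subgroup of $P$, hence a strong $p$-$CAP$-subgroup of $G$ by hypothesis, so Step~1 applied to $(G,W)$ yields $W\le Z_{\mathfrak{U}}(G)$. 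Finally, the well-known lemma on Thompson critical subgroups---namely that $P\le Z_{\mathfrak{U}}(G)$ once $W\le Z_{\mathfrak{U}}(G)$---gives $P\le Z_{\mathfrak{U}}(G)$.

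\textbf{The main obstacle} is the descent in Step~1: an element of order $p$ of $P/N$ need not lift to an element of order $p$ of $P$, so the hypothesis passes to $G/N$ only when the exponent of $P$ is controlled; this is precisely why a bare induction on $|G|$ over arbitrary $P$ fails, and why Step~2 must pass to the subgroup $W$ of the critical subgroup $C$ (which has the right exponent) rather than to $\Omega_1(P)$, which in general does not. The remaining points---that a chief factor of order $p$ is $\mathfrak{U}$-central, that $Z_{\mathfrak{U}}(G/N)=Z_{\mathfrak{U}}(G)/N$ when $N\le Z_{\mathfrak{U}}(G)$, and the critical-subgroup lemma---are standard.
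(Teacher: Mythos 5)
Your proof is correct, but it takes a genuinely different route from the paper's. You induct on $|G|$ in the bounded-exponent case (where the hypothesis really does pass to $G/N$ via Lemma~\ref{inheritance}(4), since every generator of a cyclic subgroup of order $p$ or $4$ of $P/N$ lifts to an element of order $p$ or $4$ of $P$), and then reduce the general case to it through $W=\Omega(C)$ for a Thompson critical subgroup $C$ of $P$, invoking two known facts: that ${\rm exp}(W)$ divides $p$ (resp.\ $4$), and that $W\le Z_{\mathfrak{U}}(G)$ forces $P\le Z_{\mathfrak{U}}(G)$. Both facts are legitimate here --- they correspond to [WS, Lemma~2.5] and [GW3, Lemma~2.12] in the paper's bibliography, and the authors themselves use exactly this critical-subgroup machinery in Step~2 of the proof of Theorem~\ref{main result for characterization of hypercentre}. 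The paper's own proof of Theorem~\ref{hypercentre} is instead a direct, self-contained minimal-configuration argument: it chooses a non-cyclic chief factor $L/K$ below $P$ with $|L|$ minimal, uses the cover-avoid property of $\langle x\rangle$ along a chief series through $L$ to show that every element of order $p$ or $4$ of $L$ already lies in $K$, and then runs a stability/coprime-action argument (Doerk--Hawkes A.12.4, Huppert IV.5.12, Doerk--Hawkes B.9.8) to conclude $|L/K|=p$. Your version is more modular and shorter, but it outsources the coprime-action work to the critical-subgroup lemma, whose proof is essentially the argument the paper carries out by hand; conversely, the paper's version avoids critical subgroups entirely for this theorem. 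Your diagnosis of the obstacle --- that elements of order $p$ of $P/N$ need not lift to elements of order $p$ of $P$, so the exponent must be controlled before inducting --- is precisely the difficulty both approaches are built to circumvent.
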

\begin{proof}
Assume that the theorem is not true and choose $G$ and $P$ for which it fails. Then there exists a chief factor of $G$ below $P$ which is not of prime order. Among all of the non-cyclic chief factors of $G$ below $P$, we choose $L/K$ such that $|L|$ is of minimal size.

Suppose that there exists an element $x$ of $L$ of order $p$ or $4$ which is not in $K$. Then $\langle x \rangle $ is a strong $p$-$CAP$-subgroup of $G$. Consider a chief series of $G$ passing through $L$, namely $\Gamma$:
$$1 = L_0 < L_1 < \cdots < L_{s-1} <L_s =L < L_{s+1} <\cdots < L_{m-1} <L_{m} = G. $$
It follows from the fact $L/K$ is below $P$ that $\langle x \rangle$ covers or avoids every chief factor $L_{i+1} / L_i$, $i=0,1,\cdots,s-1$. Since $L_i \geq \langle x \rangle$ for any  $i \geq s$, we conclude that $\langle x \rangle$ avoids $L_{i+1} /L_i$ for any $i \geq s$. Hence $\langle x \rangle$ covers or avoids each chief factor of $\Gamma$. By the choice of $L/K$ we get that  $L_{i+1}/L_i$  is of prime order for   $i=0,1,\cdots,s-1$. If $\langle x \rangle$ covers $L_s / L_{s-1}$, then it follows from $L_s = L_{s-1} \langle x \rangle$ that $L_s/L_{s-1}$ is of prime order. Hence we obtain that $L \leq Z_{\mathfrak{U}} (G)$. This implies that $L/K$ is of prime order, a contradiction. Hence we have that $\langle x \rangle$ avoids $L_s / L_{s-1}$. Therefore $L_s = L_s \cap \langle x \rangle = L_{s-1} \cap \langle x \rangle$, i.e. $x \in L_{s-1}$. Thus  $L_{s-1} K >K$, which yields that $L_s = L_{s-1} K$. Since $L_{s-1} K / K$ is isomorphic to $L_{s-1} / (K \cap L_{s-1})$, we conclude that $L_{s-1} / (K \cap L_{s-1})$ is a chief factor of $G$ as well. By the minimality of $|L|$, we get that $L_{s-1} / (K \cap L_{s-1})$ is of prime order, i.e. $L/K$ is of prime order, a contradiction. Hence every element of $L$ of order $p$ or $4$ is contained in $K$.

Denote by $X$   the intersection of the centralisers of the chief factors of $G$ below $K$. Since all chief factors below $K$ are of prime order, it follows that all of them are cyclic. Let 
$$1 <K_1 < K_2 < \cdots < K_q = K$$
be part of a chief series of $G$ passing through $K$. Then $[K_i,K] \leq K_{i-1}$, $i=2,3, \cdots,q$. Hence $X$ stabilises a chain of subgroups of $K$. Applying {{\cite[Chapter A, Corollary 12.4]{DH}}}, $O^p (X)$ centralises $K$. In particular,   $O^{p} (X)$ centralises every element of prime order or order $4$ of $L$. By {{\cite[Chapter IV, Satz 5.12]{H1}}}, $O^p (X)$ centralises $L$. Thus $X/C_X (L/K)$ is a normal $p$-subgroup of $G/C_X (L/K)$. By {{\cite[Chapter B, Proposition 3.12]{H1}}}, $X$ centralises $L/K$, so  that $L/K$ can be regarded as an irreducible $G/X$-module over the finite field of $p$-elements. Notice that since every chief factor $U/V$ of $G$ below $K$ is of order $p$,  we conclude that $G/C_G(U/V)$ is cyclic of order dividing $p-1$. Consequently, $G/X$ is abelian of exponent dividing $p-1$. By    {{\cite[Chapter B, Theorem 9.8]{DH}}}, $L/K$ has order $p$. This final contradiction shows that no such counterexample $G$ exists.
\end{proof}
Now we give the following characterization for $Z_{\mathfrak{U}} (G)$ under the assumption that certain subgroups of a normal $p$-subgroup $P$ of $G$  are strong $p$-$CAP$-subgroups of $G$. This characterization generalizes Theorem \ref{hypercentre} greatly  except for the case that $p=2$. 
\begin{theorem}\label{main result for characterization of hypercentre}
Let $P$ be a non-trivial normal $p$-subgroup of $G$ such that $|P| = p^n$. Suppose that $n>1$ and there exists an integer  $1<d<p^n$ with $d \,|\,p^n$ such that one of the following holds:
\begin{itemize}
\item[(1)] $p$ is odd, and every subgroup of $P$ of order $d$ is a strong $p$-$CAP$-subgroup of $G$;
\item[(2)] $p=2$,   every subgroup of $P$ of order $d$ is a strong $2$-$CAP$-subgroup of $G$, ${\rm exp} (P) \leq 2$, and every cyclic subgroup of $P$ of order $4$  is a strong $2$-$CAP$-subgroup of $G$ if $p = 2 $, $2 = d< 2^{n-1}$ and $P$ is non-abelian.
\end{itemize}
Then $P \leq Z_{\mathfrak{U}} (G)$. 
\end{theorem}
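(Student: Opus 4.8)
The plan is to argue by induction, say on $|G|\cdot|P|$ (equivalently, to study a minimal counterexample $(G,P)$). First, the case $d=p$ is nothing but Theorem~\ref{hypercentre}: in the $p$-group $P$ the cyclic subgroups of prime order are exactly the subgroups of order $p=d$, and cyclic subgroups of order $4$ cause no trouble — either $p$ is odd, or we are in case (2), where $\operatorname{exp}(P)\le 2$ excludes elements of order $4$. So I may assume $d\ge p^2$, hence $n\ge 3$; I may also assume $P$ is not cyclic, since a cyclic normal $p$-subgroup lies in $Z_{\mathfrak U}(G)$ automatically (its $G$-invariant subgroups are totally ordered, so its $G$-chief factors all have order $p$, and $G/C_G(\cdot)$ embeds in $\operatorname{Aut}(C_p)\cong C_{p-1}$).

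The first reduction is modulo a minimal normal subgroup. Suppose $G$ has a minimal normal subgroup $M$ with $M\le P$ and $|M|=p$; then $M/1$ is a $\mathfrak U$-central chief factor, so $M\le Z_{\mathfrak U}(G)$. Passing to $\overline G=G/M$ and $\overline P=P/M$, one checks that $(\overline G,\overline P)$ again satisfies the hypothesis with the divisor $d/p$: here $1<d/p<|\overline P|$ because $p\le d<p^n$, the subgroups of $\overline P$ of order $d/p$ are the images of the subgroups of $P$ of order $d$ containing $M$ (and these stay strong $p$-$CAP$-subgroups of $\overline G$ by Lemma~\ref{inheritance}(4)), and in case (2) $\operatorname{exp}(\overline P)\le\operatorname{exp}(P)\le 2$. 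By induction $\overline P\le Z_{\mathfrak U}(\overline G)$, and since $M\le Z_{\mathfrak U}(G)$ the standard behaviour of the $\mathfrak U$-hypercentre gives $Z_{\mathfrak U}(G/M)=Z_{\mathfrak U}(G)/M$, whence $P\le Z_{\mathfrak U}(G)$ — a contradiction. So from now on every minimal normal subgroup of $G$ contained in $P$ has order $\ge p^2$. A companion reduction handles the Frattini subgroup: if $\Phi(P)\ne 1$ and $|\Phi(P)|<d$, then $(G/\Phi(P),P/\Phi(P))$ satisfies the hypothesis with divisor $d/|\Phi(P)|$ (every subgroup of $P/\Phi(P)$ is the image of a subgroup of $P$ containing $\Phi(P)$), so by induction $P/\Phi(P)\le Z_{\mathfrak U}(G/\Phi(P))$; that is, $G$ acts on $P/\Phi(P)$ with all chief factors cyclic. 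A Jennings-type argument — the associated graded restricted Lie algebra of $P$ is generated in degree one, so a supersolvable action on $P/\Phi(P)$ forces a supersolvable action on every lower $p$-central factor, hence on $P$ — then yields $P\le Z_{\mathfrak U}(G)$, a contradiction. Thus I may also assume $\Phi(P)=1$ (so $P$ is elementary abelian) or $|\Phi(P)|\ge d$.

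The remaining work is combinatorial: fixing a minimal normal subgroup $N$ of $G$ with $N\le P$ and $|N|=p^k$, $k\ge 2$, I look for a subgroup $H\le P$ with $|H|=d$ such that $1<|H\cap N|<|N|$ — then $H$ neither covers ($N\not\le H$) nor avoids ($H\cap N\ne 1$) the $pd$-chief factor $N/1$ of $G$, contradicting that $H$ is a $p$-$CAP$-subgroup of $G$. If $|N|>d$ I simply enlarge an order-$p$ subgroup $W\le N$ to a subgroup $H$ of order $d$ inside $P$; since $|H|<|N|$ automatically $N\not\le H$. If $N\not\le\Phi(P)$ (in particular whenever $P$ is elementary abelian) I choose a maximal subgroup $M_0$ of $P$ with $N\not\le M_0$, so $|M_0\cap N|=p^{k-1}\ge p$, and enlarge an order-$p$ subgroup of $M_0\cap N$ to a subgroup $H$ of order $d$ inside $M_0$ (possible as $d\le p^{n-1}=|M_0|$). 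The delicate case is $N\le\Phi(P)$ together with $|N|\le d\le|\Phi(P)|$; here, for $p$ odd, I would push $H$ through a subgroup $N'<N$ of index $p$ and use the classical counting and subgroup structure of (necessarily non-cyclic) $p$-groups — Kulakoff's theorem and Philip Hall's enumeration — to guarantee that not every subgroup of order $d$ of $P$ through a fixed order-$p$ subgroup of $N$ can contain all of $N$; for $p=2$ this case does not occur, since $\operatorname{exp}(P)\le 2$ makes $P$ elementary abelian. Together with the reductions above, this contradiction completes the induction.

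I expect the combinatorial core — and inside it the configuration $N\le\Phi(P)$ with $|\Phi(P)|$ comparatively large, in particular $d=|\Phi(P)|$ — to be the main obstacle: unlike the other configurations, deep position inside the Frattini subgroup blocks the naive ``enlarge a small subgroup'' construction, so one must genuinely exploit the structure and counting of subgroups of $p$-groups (and treat $p=2$ separately via the exponent hypothesis). The rest — verifying that each reduced triple $(G/M,P/M,d/p)$ and $(G/\Phi(P),P/\Phi(P),d/|\Phi(P)|)$ still satisfies \emph{every} clause of the hypothesis, via Lemma~\ref{inheritance}(4) and monotonicity of the exponent — is routine bookkeeping.
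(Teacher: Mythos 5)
Your reductions at the outset are sound: the case $d=p$ does collapse to Theorem \ref{hypercentre} (for $p$ odd there are no order-$4$ subgroups, and under the literal reading of hypothesis (2) the condition $\operatorname{exp}(P)\le 2$ forces $P$ elementary abelian), the passage modulo an order-$p$ minimal normal subgroup with divisor $d/p$ is legitimate via Lemma \ref{inheritance}(4) and the Jordan--H\"older argument for lifting hypercentrality, and your constructions of a subgroup $H$ of order $d$ with $1<|H\cap N|<|N|$ work when $|N|>d$ or when $N\not\le\Phi(P)$. But the proof is not complete: the configuration you yourself flag as the main obstacle --- a non-cyclic minimal normal subgroup $N$ with $N\le\Phi(P)$ and $|N|\le d\le|\Phi(P)|$ --- is a genuine gap, not a routine verification. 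An appeal to Kulakoff's theorem or Hall's enumeration does not obviously close it: starting from a fixed subgroup $W$ of order $p$ inside $N$, every overgroup of order $d$ certainly meets $N$ nontrivially, but nothing prevents all of them from containing $N$ outright (already the number of overgroups of a given subgroup of each intermediate order is only known to be $\equiv 1 \pmod p$, so it can equal $1$ at some layer), and since $N$ lies inside $\Phi(P)$ you cannot steer $H$ into a maximal subgroup missing $N$. As written, the induction does not terminate in this case.

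The paper's proof avoids this configuration entirely by a different decomposition, and it is worth seeing why. Instead of trying to produce a cover/avoid contradiction against a deep non-cyclic $N$, it first shows that the hypothesis is inherited by $(G/N,P/N)$ for \emph{every} minimal normal $N\le P$: the only delicate point is ruling out $|N|=d$, which is done by taking $B\le P$ of order $pd$ containing $N$, observing that $B$ is non-cyclic so it has a maximal subgroup $T\ne N$, and computing $|T\cap N|=d/p$, so that $T$ (of order $d$) neither covers nor avoids $N/1$ unless $d=p$. With $|N|<d$ the quotient hypothesis follows from Lemma \ref{inheritance}(4), minimality gives $P/N\le Z_{\mathfrak U}(G/N)$ even for non-cyclic $N$, and the conclusion is then routed through the Frattini quotient: one shows $\Phi(P)\ne 1$ (the only place a direct cover/avoid construction is needed is the elementary abelian case, where a complement of $N$ in $P$ makes your construction trivial), and finishes with the known implication $P/\Phi(P)\le Z_{\mathfrak U}(G/\Phi(P))\Rightarrow P\le Z_{\mathfrak U}(G)$ --- the same lemma your ``Jennings-type argument'' gestures at. The structural lesson is that one should quotient by $N$ itself rather than insist on reducing only modulo order-$p$ minimal normal subgroups; if you reorganize your induction along those lines, your remaining ingredients suffice.
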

\begin{proof}
Suppose that the theorem fails, and let $(G,P)$ be a counterexample for which $|G|+|P|$ is minimal. Denote the subgroup $Z_{\mathfrak{U}} (G)$  by $Z$. 
\begin{itemize}
\item[\textbf{Step 1.}]  $d < p^{ n-1}$. 
\end{itemize}

Suppose that $d = p^{ n-1}$. We claim that, if $N$ is a minimal normal subgroup of $G$ that is contained in $P$, then $P/N \leq Z_{\mathfrak{U}} (G/N)$. Moreover, $G$ has the  unique minimal normal subgroup $N$ of $G$ contained in $P$ and $|N| > p$. Actually, it follows from Lemma \ref{inheritance} (4) that our hypothesis of the theorem holds for the pair $(G/N,P/N)$. Therefore, we obtain from the choice of $(G,P)$ that $P/N \leq Z_{\mathfrak{U}} (G/N)$. If $G$ has two different minimal normal subgroups $R$ and $N$ that are contained in $P$, then we conclude from the fact $RN/R \cong N$  that $N$ is a cyclic subgroup of order $p$. Hence $P$ is contained in $Z$, which is  impossible. Thus $N$ is the unique minimal normal subgroup of $G$ contained in $P$, and the order of $N$ must bigger than $p$, as claimed.

Now we claim that $\Phi(P) \neq 1$. Indeed, suppose that $\Phi(P) = 1$. Then $P$ must be an elementary abelian $p$-group. Let $X$ be a maximal subgroup of $N$. Notice that $|N| >p$ as proved in the preceding paragraph, we obtain that $X$ is non-trivial. Let $B$ be a complement of $N$ in $P$, and denote the product $XB$ by $H$. Then obviously $H$ is a maximal subgroup of $P$. Notice again that $d = p^{ n-1}$, $H$ is a strong $p$-$CAP$-subgroup of $G$. Therefore $H$ covers or avoids the $pd$-chief factor $N/1$, so that $NH = H$ or $H  \cap N = 1$. The first case suggests that $N \leq H$, which is  impossible. The second case implies that $X = 1$, which is also absurd. Therefore $\Phi (P) \neq 1$, as needed. 

It follows from our preceding paragraph that $\Phi(P)$ is non-trivial. Let $N$ be a minimal normal subgroup of $G$ that is contained in $\Phi (P)$. Then $(G/N,P/N)$ still satisfies the hypothesis of the theorem, so that $P/N \leq Z_{\mathfrak{U}} (G/N)$. Therefore we obtain from {\cite[Lemma 2.2 (3)]{GW2}} that $P/\Phi(P) \leq Z_{\mathfrak{U}} (G/\Phi(P))$. Applying {\cite[Lemma 2.4]{WS}}, we have that $P \leq Z$, a contradiction. Thus we conclude that  $d < p^{ n-1}$.
\begin{itemize}
\item[\textbf{Step 2.}] $d>p$. 
\end{itemize}

Suppose that $d=p$. Let $B$ be a Thompson critical subgroup of $P$ and denote the subgroup $\Omega (B)$ by $\Omega$. Let $P/S$ be a chief factor of $G$. Assume that $|S| = p$, then we get  that $S \leq Z$. Now, consider the case that $|S| >p$. Then the hypothesis holds for $(G,S)$. Therefore we obtain from the choice of $(G,P)$ that $S \leq Z$, so that $P/S$ must be non-cyclic.  Now let $V \neq P$ be any normal subgroup of $G$ which is contained in $P$. Then we have that $V \leq Z$ as the hypothesis also holds for the pair $(G,V)$. If $V \not\leq S$, then we obtain from the fact $P/S \cong VS/S \cong V / (V \cap S)$ that $P/S$ is cyclic, which is impossible. Therefore we conclude that $V \leq S$, which implies that $\Omega = P$. This is because the fact $\Omega < P$ yields that $\Omega \leq S\leq Z$, so that $P \leq Z$ by {\cite[Lemma 2.12]{GW3}}, a contradiction. Hence $\Omega = P$, as needed.

Now let $L/S$ be any minimal subgroup of $(P/S) \cap Z(M /S)$, where $M$ is a Sylow $p$-subgroup of $G$. Let $x \in L \setminus S$ and $H : = \langle x \rangle $. Then we obtain that $x$ is of order prime or 4 by the fact that $\Omega = P$ and {\cite[Lemma 2.5]{WS}}. Obviously, if $x$ is of order 4, then we must have that  $x^2 \in S$. Now, since $H$ is a strong $p$-$CAP$-subgroup by our hypothesis, we obtain that $H$ either covers or avoids $P/S$. If $H \cap P = H\cap S$,  then we obtain that $S\leq H$, which is absurd. Therefore we get that $HP = P = HS$. Notice that $|P| = |H| |S| / |H \cap S|$, if $H$ is of prime order, then $P/S$ is of prime order, a contradiction. Hence $H$ is of order 4. However, we have that $|H \cap S| \geq 2$, so that $P/S$ is still of prime order, another contradiction. Thus we conclude that  $d>p$, as desired.

\begin{itemize}
\item[\textbf{Step 3.}] $|N| \leq d$ for every minimal  normal subgroup $N$ of $G$ contained in $P$.
\end{itemize}

Assume on the other hand that $|N| > d$. Then there exists a non-trivial proper subgroup $K$ of $P$ of order $d$.  Hence   $K$ is a strong $p$-$CAP$-subgroup of $G$, so that $K$ covers or avoids  the $pd$-chief factor $N/1$. Therefore we have that $N K = K$ or $K \cap N = 1$. The first case implies that $N \leq K$, which is absurd. The second case contradicts the fact that $1 < K < N$. Thus we obtain that $|N | \leq d$, as desired. 
\begin{itemize}
\item[\textbf{Step 4.}] Suppose that  $N$ is a minimal normal subgroup of $G$ which is  contained in $P$, then the hypothesis still holds for the pair $(G/N, P/N)$.
\end{itemize}

Obviously, if either the order of $N$ is less than $d$ and $p >2$,  or the order of $N$ is less than $d/2$ and $p = 2$, then our conclusion follows directly from Lemma \ref{inheritance} (4). Suppose on the other hand that $p>2$ and $|N| = d$; or     $p=2$ and $|N| = d/2$ or $d$. Now, we claim that $|N| $ can not equal $d$ for any prime $p$. Actually, since  $N$ is not cyclic as $|N| = d \geq p^2$, we obtain that every subgroup of $P$ which contains $N$ is non-cyclic. Let $B$ be a subgroup of $P$ of order $p \cdot d$ that contains $N$. Then $B$ is not cyclic, so that $B = NT$ for some maximal subgroup $T$ of $B$. Notice that since $T$ is of order $d$, we conclude that $T$ is a strong $p$-$CAP$-subgroup of $G$, so that $T$ covers or avoids $N/1$. Therefore $N \leq T$ or $N \cap T = 1$. Notice that $N \not\leq T$, we obtain that $N \cap T =1$. As $N$ and $T$ are maximal subgroups of $B$, we conclude that $N$ is of prime order, i.e. $d = p$, a contradiction to Step 2. Hence  it suffices to deal with the case that $|N| = d/2$. 

If $|N| >2$, then $N$ must be non-cyclic. Hence we may consider the cyclic quotient $Q/N$ of order 4. Notice that $Q$ is a non-cyclic subgroup of $P$, we get that there exists two maximal subgroups $A$ and $B$ of $Q$ such that $Q = AB$ and $N \leq B$. If $N \not\leq B$, then we obtain that $Q = NB$, so that $Q/N = BN/N$ is a strong $2$-$CAP$-subgroup of $G/N$ by Lemma \ref{inheritance} (4).  Suppose that $N \leq B$, then we conclude  that $A/N$, $B/N$ are two different subgroups of $Q/N$ of order 2, so that $Q/N$ is not cyclic, which is impossible. Now it suffices to deal with the case that $|N| = 2$ and $d =4$.

It is easy to see from Lemma \ref{inheritance} (4) that all subgroups of $P/ N$ of order 2 are strong $2$-$CAP$-subgroups of $G/N$. Let $Q/N$ be an arbitrary cyclic subgroup of $P/N$ such  that $Q/N$ is of order 4. Suppose that $Q$ is non-cyclic, then there exists two maximal subgroups $X,Y$ of $Q$. If $X \not\geq N$, then $Q/N = XN/N$, so that $Q/N$ is  a strong $2$-$CAP$-subgroup of $G/N$ by Lemma \ref{inheritance} (4). A similar argument yields the same for $Y$, and so we only need to consider the situation that $N \leq X,Y$. However, this implies that $Q/N$ is non-cyclic, a contradiction. Hence $Q$ is cyclic. Notice that $Q$ is of order 8, this indicates that ${\rm exp} (P) \geq 3$, a contradiction as well. Thus we finally conclude that all cyclic  subgroups of $P/N$ of order 4 are strong $2$-$CAP$-subgroups of $G/N$, so that Step 4 is finished.
\begin{itemize}
\item[\textbf{Step 5.}] If $ P/N\leq  Z_{\mathfrak{U}}(G/N)$ holds for every minimal normal subgroup $N$ of $G$ which is  contained in $P$, then $\Phi(P) \neq  1$.
\end{itemize}

Suppose that $\Phi(P) = 1$. Then we obtain that $P$ is an elementary abelian $p$-group. Assume that  there  exists a minimal normal subgroup $S \neq N$ of $G$ which is contained in $P$. Since we have $P/S \leq Z_{\mathfrak{U}} (G/S)$ by Step 4, and we obtain from the fact $N \cong NS/S$ that $N \leq Z$, we conclude that $P\leq Z$, which is absurd. Therefore $N$ is the unique minimal normal subgroup of $G$ which is contained in $P$ and $N \not\leq Z$. Thus $Z \cap P = 1$ and $|N| >p$. 

Let $X$ be a maximal subgroup of $N$. Denote by $D$ a complement of $N$ in $P$ and by $B$ a subgroup of $D$ such that $XB$ is of order $d$. Then we obtain that $XB$ is a strong $p$-$CAP$-subgroup of $G$. However, this implies that $XB$ either covers or avoids the chief factor $N/1$ of $G$, i.e. $XB \leq N$ or $XB \cap N = 1$. The first case induces that $B \leq N$, impossible. The second case suggests that $X \cap N = 1$, which is also absurd. Hence we   obtain that $\Phi(P) \neq 1$. 
\begin{itemize}
\item[\textbf{Step 6.}] Final contradiction.
\end{itemize}

It follows from Step 4 that the pair $(G/N, P / N)$  satisfies the hypothesis of the theorem for every minimal normal subgroup $N$ of $G$   contained in $P$, and so $ P/N\leq  Z_{\mathfrak{U}}(G/N)$ holds for every minimal normal subgroup $N$ of $G$ which is  contained in $P$. Applying Step 5 we conclude that $\Phi(P) \neq 1$. Applying {\cite[Lemma 2.2 (3)]{GW2}} again, we have  that $P/\Phi(P) \leq Z_{\mathfrak{U}} (G/\Phi(P))$. Therefore, we obtain from {\cite[Lemma 2.4]{WS}} that $P \leq Z$. The final contradiction proves the theorem.
\end{proof}
Since every cyclic chief factor of order $2$ in a chief series of a group is central, applying Theorem \ref{hypercentre} and Theorem \ref{main result for characterization of hypercentre}, we have the following corollary.
\begin{corollary}
Let $P$ be a normal $2$-subgroup of $G$. Suppose that:
\begin{itemize}
\item[(1)] every cyclic subgroup of $P$ of order $2$ or $4$ is a strong $2$-$CAP$-subgroup of $G$;
\item[(2)] ${\rm exp} (P) \leq 2$ and  there exists an integer $2 <d < |P|$ with $d\,|\,|P|$ such that every subgroup of $P$ of order $d$ and every   cyclic subgroup of $P$ of order    $4$ (if $d<|P| /2$ and $P$ is non-abelian) is a strong $2$-$CAP$-subgroup of $G$.
\end{itemize}
Then $P$ lies in the nilpotent hypercentre of $G$, namely $Z_{\infty} (G)$.
\end{corollary}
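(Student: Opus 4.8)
\emph{Proof proposal.} The plan is to extract the stronger inclusion $P\le Z_{\mathfrak U}(G)$ from the two theorems already established, and then to upgrade ``supersolvable hypercentre'' to ``nilpotent hypercentre'' using that $P$ is a $2$-group.

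First I would show that $P\le Z_{\mathfrak U}(G)$. If hypothesis (1) holds, then, since for $p=2$ the phrases ``of prime order'' and ``of order $2$'' coincide, the assumption of Theorem~\ref{hypercentre} holds verbatim with $p=2$, and that theorem yields $P\le Z_{\mathfrak U}(G)$. If hypothesis (2) holds, write $|P|=2^n$ and let $d$ be the prescribed divisor; from $2<d<|P|$ and $d\mid 2^n$ one gets $4\le d\le 2^{n-1}$, hence $n\ge 3$, so in particular $n>1$ and all numerical requirements of Theorem~\ref{main result for characterization of hypercentre} are met. Clause (2) of that theorem only asks for information on cyclic subgroups of $P$ of order $4$ when $d=2$, which is excluded here; moreover ${\rm exp}(P)\le 2$ means $P$ has no such subgroups at all, so that clause is vacuous. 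Thus case~(2) of Theorem~\ref{main result for characterization of hypercentre} applies and again $P\le Z_{\mathfrak U}(G)$.

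It then remains to deduce $P\le Z_\infty(G)$. Here I would take a chief series of $G$ passing through $P$ and examine a chief factor $H/K$ with $H\le P$. By the observation recalled just before Theorem~\ref{hypercentre}, $H/K$ is $\mathfrak U$-central, hence cyclic; being a section of the $2$-group $P$ it has order $2$, so $G/C_G(H/K)$ embeds in ${\rm Aut}(C_2)=1$ and therefore $H/K$ is central in $G$. Since every chief factor of $G$ below $P$ is $G$-central, $P$ lies in $Z_\infty(G)$, as claimed.

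I do not anticipate a genuine obstacle: the substance is entirely in Theorems~\ref{hypercentre} and~\ref{main result for characterization of hypercentre}. The only points requiring care are the bookkeeping that their hypotheses match the present ones for $p=2$ (in particular that $2<d<|P|$ forces $n>1$ and that the order-$4$ clause is never triggered under ${\rm exp}(P)\le 2$), and the elementary but essential remark that a $\mathfrak U$-central chief $2$-factor is automatically $G$-central, which is exactly what turns the supersolvable hypercentre into the nilpotent hypercentre on a normal $2$-subgroup.
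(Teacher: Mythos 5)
Your proposal is correct and follows the paper's own (very brief) argument exactly: the paper likewise obtains $P\le Z_{\mathfrak U}(G)$ from Theorem \ref{hypercentre} in case (1) and Theorem \ref{main result for characterization of hypercentre} in case (2), and then passes to $Z_\infty(G)$ via the remark that a chief factor of order $2$ is central. Your bookkeeping (that $2<d<|P|$ forces $n\ge 3$ and that the order-$4$ clause is vacuous under ${\rm exp}(P)\le 2$) is sound and merely makes explicit what the paper leaves implicit.
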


In the 2024 paper of F. Aseeri and J. Kaspczyk \cite{FJ}, useful techniques were provided to give new criteria of supersolvable fusion systems.
\begin{lemma}[{{\cite[Lemma 2.6]{FJ}}}]\label{fusion system 1}
Let $p$ be a prime and $\mathcal{F}$ be a saturated fusion system on a finite $p$-group $S$. Assume that the fusion system $N_{\mathcal{F}} (Q)$ is supersolvable for any $Q \in \mathcal{E}_{\mathcal{F}} ^{*}$, then $\mathcal{F}$ is supersolvable.\end{lemma}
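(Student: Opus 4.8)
The plan is to combine the Alperin--Goldschmidt fusion theorem with a minimal counterexample argument. Recall that Alperin--Goldschmidt asserts that $\mathcal{F}$ is generated by the subsystems $N_{\mathcal{F}}(Q)$ --- equivalently by the automorphism groups ${\rm Aut}_{\mathcal{F}}(Q)$ --- as $Q$ runs over fully $\mathcal{F}$-normalized representatives of the members of $\mathcal{E}_{\mathcal{F}}^{*} = \mathcal{E}_{\mathcal{F}} \cup \{S\}$, so that every morphism of $\mathcal{F}$ is a composite of restrictions of automorphisms of such subgroups. The first step is to extract the resulting local detection principle for strong closure: a subgroup $T \le S$ is strongly $\mathcal{F}$-closed if and only if $T \cap Q$ is ${\rm Aut}_{\mathcal{F}}(Q)$-invariant for every $Q \in \mathcal{E}_{\mathcal{F}}^{*}$.

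Now suppose, for contradiction, that the lemma fails, and let $\mathcal{F}$ be a counterexample with $|S|$ as small as possible. If $\mathcal{E}_{\mathcal{F}} = \emptyset$, then $\mathcal{F} = N_{\mathcal{F}}(S)$ is supersolvable by hypothesis, a contradiction; so there is an essential subgroup. Next I would set up the reduction step: it suffices to produce one nontrivial cyclic strongly $\mathcal{F}$-closed subgroup $T$ of $S$. Indeed, given such $T$, the quotient fusion system $\mathcal{F}/T$ over $S/T$ is again saturated; I would check it again satisfies the hypothesis, using that every member of $\mathcal{E}^{*}_{\mathcal{F}/T}$ is the image of a member $Q$ of $\mathcal{E}_{\mathcal{F}}^{*}$ with $T \le Q$, that $N_{\mathcal{F}/T}(Q/T)$ is then a quotient of $N_{\mathcal{F}}(Q)$, and that quotients of supersolvable fusion systems are supersolvable. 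Then $|S/T| < |S|$ forces $\mathcal{F}/T$ to be supersolvable, and pulling back a witnessing series along $S \twoheadrightarrow S/T$ (preimages of strongly $(\mathcal{F}/T)$-closed subgroups are strongly $\mathcal{F}$-closed) and prepending $1 \le T$ makes $\mathcal{F}$ supersolvable --- the desired contradiction. So everything comes down to constructing $T$.

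Producing a nontrivial cyclic strongly $\mathcal{F}$-closed subgroup is the heart of the argument, and the step I expect to be the main obstacle. Since $Z(S) \le Q$ for every $\mathcal{F}$-centric $Q$, in particular for every $Q \in \mathcal{E}_{\mathcal{F}}^{*}$, the detection principle reduces this to finding a nontrivial cyclic subgroup of $Z(S)$ that is invariant under ${\rm Aut}_{\mathcal{F}}(Q)$ for all $Q \in \mathcal{E}_{\mathcal{F}}^{*}$ simultaneously. The supersolvability of $N_{\mathcal{F}}(S)$ already supplies a nontrivial ${\rm Aut}_{\mathcal{F}}(S)$-invariant subgroup of $Z(S)$, and hence one of order $p$ after refining inside a cyclic $p$-group, whose subgroups are characteristic; the difficulty is the coherence with the essential subgroups. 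For an essential $Q$ one has $Z(S) \le Z(N_S(Q))$, and replacing the constrained system $N_{\mathcal{F}}(Q)$ by its model --- a $p$-supersolvable finite group $L_{Q}$ with $Q \unlhd L_{Q}$, $N_S(Q) \in {\rm Syl}_p(L_{Q})$ and $\mathcal{F}_{N_S(Q)}(L_{Q}) = N_{\mathcal{F}}(Q)$ --- an ${\rm Aut}_{\mathcal{F}}(Q)$-invariant subgroup of $Q$ is exactly a normal subgroup of $L_{Q}$, so all of its $L_{Q}$-chief factors have order $p$. The genuinely delicate point is that these constraints, coming from different subgroups $Q$ through possibly incompatible flags on $Z(S)$, do not by themselves force a joint invariant subgroup to be cyclic. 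I would resolve this inside the minimal counterexample, where the extremal choice of $\mathcal{F}$ lets one assume that no proper nontrivial strongly $\mathcal{F}$-closed subgroup of $S$ lies in $Z(S)$, and then derive a contradiction by confronting this with the structure of the models $L_{Q}$ and with Alperin--Goldschmidt generation, which together constrain the common action on $Z(S)$. This coherence step is where essentially all of the work lies.
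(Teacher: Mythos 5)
The paper itself offers no proof of this statement; it is imported verbatim from \cite[Lemma 2.6]{FJ}, so there is nothing internal to compare against. Judged on its own terms, your proposal is not a proof: the entire argument funnels into the construction of a nontrivial cyclic strongly $\mathcal{F}$-closed subgroup $T\le Z(S)$ that is simultaneously compatible with ${\rm Aut}_{\mathcal{F}}(Q)$ for every $Q\in\mathcal{E}_{\mathcal{F}}^{*}$, and at exactly that point you write that you ``would resolve this \dots by confronting this with the structure of the models and with Alperin--Goldschmidt generation.'' That is the theorem, not a proof of it; as you yourself say, ``this coherence step is where essentially all of the work lies,'' and it is left undone. (There are also smaller unverified claims in the reduction, e.g.\ that every member of $\mathcal{E}_{\mathcal{F}/T}^{*}$ is the image of a member of $\mathcal{E}_{\mathcal{F}}^{*}$ containing $T$ and that $N_{\mathcal{F}/T}(Q/T)$ is the corresponding quotient of $N_{\mathcal{F}}(Q)$; these are true but need justification.)

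The missing idea is that the hypothesis is far stronger than you are using: a supersolvable saturated fusion system $\mathcal{G}$ over $P$ satisfies $\mathcal{G}=N_{\mathcal{G}}(P)$. Indeed, by \cite[Proposition 1.3]{SN} one may write $\mathcal{G}=\mathcal{F}_P(H)$ with $H$ ($p$-)supersolvable, and a $p$-supersolvable group with $O_{p'}=1$ has a normal Sylow $p$-subgroup, so after factoring out $O_{p'}(H)$ the Sylow subgroup is normal and every morphism of $\mathcal{G}$ extends to an $\mathcal{F}$-automorphism of $P$. Now let $Q\in\mathcal{E}_{\mathcal{F}}$ be fully normalized and set $N=N_S(Q)>Q$. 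Applying the above to $N_{\mathcal{F}}(Q)$, every $\alpha\in{\rm Aut}_{\mathcal{F}}(Q)$ extends to some $\beta\in{\rm Aut}_{\mathcal{F}}(N)$ with $\beta(Q)=Q$, whence $\alpha\,{\rm Aut}_N(Q)\,\alpha^{-1}={\rm Aut}_{\beta(N)}(Q)={\rm Aut}_N(Q)$. Thus ${\rm Aut}_N(Q)\,{\rm Inn}(Q)/{\rm Inn}(Q)\cong N/Q\neq 1$ is a normal $p$-subgroup of ${\rm Out}_{\mathcal{F}}(Q)$, so $O_p({\rm Out}_{\mathcal{F}}(Q))\neq 1$, contradicting the fact that an essential subgroup has ${\rm Out}_{\mathcal{F}}(Q)$ containing a strongly $p$-embedded subgroup. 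Hence $\mathcal{E}_{\mathcal{F}}=\emptyset$, and the Alperin--Goldschmidt theorem gives $\mathcal{F}=N_{\mathcal{F}}(S)$, which is supersolvable by hypothesis. In short: no minimal counterexample, no quotient induction, and no coherence problem on $Z(S)$ are needed once one observes that the hypothesis kills all essential subgroups.
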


\begin{lemma}[{{\cite[Lemma 2.9]{FJ}}}]\label{fusion system 2}
Let $G$ be a finite group, $p \in \pi(G)$, and $S$ be a Sylow $p$-subgroup of $G$. Suppose that for any proper subgroup $H$ of $G$ with $O_p (G) <S \cap H$ and $S \cap H \in {\rm Syl}_p (H)$, the fusion system $\mathcal{F}_{S \cap H} (H)$ is supersolvable. Assume additionally that $O_p (G) \leq Z_{\mathfrak{U}} (G)$. Then $\mathcal{F}_{S} (G)$ is supersolvable.
\end{lemma}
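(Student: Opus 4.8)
The plan is to deduce the statement from Lemma \ref{fusion system 1} by verifying that $N_{\mathcal{F}}(Q)$ is supersolvable for every $Q \in \mathcal{E}_{\mathcal{F}}^*$, where I write $\mathcal{F} := \mathcal{F}_S(G)$. I would first dispose of the degenerate case in which $S$ is normal in $G$, i.e. $O_p(G) = S$. Here $S \leq Z_{\mathfrak{U}}(G)$ forces every chief factor of $G$ lying below $S$ to be $\mathfrak{U}$-central, hence cyclic of order $p$; refining a chief series of $G$ through $S$ then produces $1 = S_0 < S_1 < \cdots < S_k = S$ with each $S_i \trianglelefteq G$ and each $S_{i+1}/S_i$ cyclic. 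Since each $S_i$ is normal in $G$ it is strongly $\mathcal{F}$-closed, so this series is exactly a supersolvable series for $\mathcal{F}$, and the conclusion holds in this case.

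So I may assume $O_p(G) < S$. Fixing $Q \in \mathcal{E}_{\mathcal{F}}^*$ chosen fully $\mathcal{F}$-normalized in its conjugacy class, I have $N_S(Q) \in {\rm Syl}_p(N_G(Q))$ and the standard identification $N_{\mathcal{F}}(Q) = \mathcal{F}_{N_S(Q)}(N_G(Q))$ of normalizer fusion systems with group normalizers (as in \cite{CR}). The aim is to apply the hypothesis to $H := N_G(Q)$, for which I must check that $H$ is a proper subgroup of $G$, that $S \cap H = N_S(Q) \in {\rm Syl}_p(H)$, and that $O_p(G) < N_S(Q)$. The Sylow condition is immediate from full normalization, so the content lies in the other two.

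For those I would use that every member of $\mathcal{E}_{\mathcal{F}}^*$ contains $O_p(\mathcal{F})$, together with $O_p(G) \leq O_p(\mathcal{F})$, which holds because $O_p(G)$ is normal and strongly closed in $\mathcal{F}$. If $Q = S$, then $N_G(S) < G$ since $S$ is not normal in $G$, and $S \cap N_G(S) = S > O_p(G)$. If instead $Q$ is $\mathcal{F}$-essential, then $Q$ is proper in the $p$-group $S$, whence $O_p(G) \leq O_p(\mathcal{F}) \leq Q < N_S(Q)$ gives the strict inequality $O_p(G) < N_S(Q)$; moreover $N_G(Q) = G$ would make $Q$ a normal $p$-subgroup, forcing $Q = O_p(G) = O_p(\mathcal{F})$ and contradicting the fact that an $\mathcal{F}$-essential subgroup is never normal in $\mathcal{F}$, so $N_G(Q) < G$. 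In either case the hypothesis applies to $H = N_G(Q)$ and yields that $N_{\mathcal{F}}(Q) = \mathcal{F}_{N_S(Q)}(N_G(Q))$ is supersolvable. Lemma \ref{fusion system 1} then gives that $\mathcal{F}_S(G)$ is supersolvable.

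The main obstacle I anticipate is the bookkeeping around the chain $O_p(G) \leq O_p(\mathcal{F}) \leq Q$ and the claim that an essential subgroup is never normal in $\mathcal{F}$, since these two facts are precisely what guarantee both the strict inequality $O_p(G) < N_S(Q)$ and the properness of $N_G(Q)$, the two hypotheses needed to invoke the assumption on proper subgroups. It is worth noting that the hypercentre condition $O_p(G) \leq Z_{\mathfrak{U}}(G)$ is consumed entirely in the base case $O_p(G) = S$, where no proper reduction is available; in all other cases the reduction to proper subgroups is driven purely by the normalizer structure of $\mathcal{F}$.
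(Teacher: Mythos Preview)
The paper does not prove this lemma; it is quoted verbatim from \cite[Lemma 2.9]{FJ}. So there is no in-paper proof to compare against, and your attempt must be judged on its own.

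Your overall strategy---reduce to Lemma \ref{fusion system 1} by showing each $N_{\mathcal{F}}(Q)$ is supersolvable---is sound, and your treatment of the case $O_p(G)=S$ and of the case $Q=S$ is fine. The gap is in the essential case, specifically the assertion that ``an $\mathcal{F}$-essential subgroup is never normal in $\mathcal{F}$''. This is false. Take $G=S_4$, $p=2$, $S\cong D_8$: the normal Klein four-group $V_4=O_2(S_4)$ is $\mathcal{F}$-centric with $\mathrm{Out}_{\mathcal{F}}(V_4)\cong S_3$, which has a strongly $2$-embedded subgroup, so $V_4$ is essential; yet $V_4\trianglelefteq G$, hence $V_4=O_2(\mathcal{F})$ is normal in $\mathcal{F}$ and $N_G(V_4)=G$. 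Thus the step ``$N_G(Q)=G$ forces $Q=O_p(\mathcal{F})$, contradiction'' does not go through as written.

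What rescues the argument is precisely the hypothesis you claimed was exhausted in the base case. If $Q=O_p(G)$ and $O_p(G)\leq Z_{\mathfrak{U}}(G)$, then every $G$-chief factor below $O_p(G)$ has order $p$; the intersection $X$ of their centralizers has $p'$-index in $G$ and $X/C_X(O_p(G))$ is a $p$-group (stability of a chain in a $p$-group). Hence $\mathrm{Aut}_{\mathcal{F}}(O_p(G))=G/C_G(O_p(G))$ has a normal Sylow $p$-subgroup, and so does its quotient $\mathrm{Out}_{\mathcal{F}}(O_p(G))$. A group with a normal Sylow $p$-subgroup cannot have a strongly $p$-embedded subgroup, so $O_p(G)$ is not $\mathcal{F}$-essential. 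This closes the case $N_G(Q)=G$ and the rest of your argument then goes through. Your closing remark that the hypercentre condition is used only when $O_p(G)=S$ is therefore incorrect: it is exactly what rules out the problematic normal essential subgroup.
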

\begin{lemma}[{{\cite[Proposition 3.1]{WS}}}]\label{lemma GWB}
Let $P$ be a non-identity normal subgroup of $G$ with $|P|=p^n$, where $p$ is a prime divisor of $|G|$. Suppose that $n>1$, and there exists an integer $n>k \geq 1$ that any subgroup of $P$ of order $p^k$ and any cyclic subgroup of $P$ of order $4$ (if $P$ is a non-abelian 2-group and $n-1>k=1$) are $\mathfrak{U}$-embedded in $G$. Then $P \leq Z_{\mathfrak{U}} (G)$.
\end{lemma}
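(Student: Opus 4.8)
This is quoted from \cite[Proposition~3.1]{WS}; here is how I would prove it. The argument runs closely parallel to the proof of Theorem~\ref{main result for characterization of hypercentre}, with the class of strong $p$-$CAP$-subgroups replaced throughout by that of $\mathfrak{U}$-embedded subgroups. I would argue by a minimal counterexample: suppose the statement fails and pick $(G,P)$ with $|G|+|P|$ minimal among the failures, and write $Z:=Z_{\mathfrak{U}}(G)$. The facts I would rely on are the inheritance properties of $\mathfrak{U}$-embedded subgroups established in \cite{WS} (passage to the sections $HN/N\le G/N$ for $N\unlhd G$, and to suitable overgroups), together with the reduction $P/\Phi(P)\le Z_{\mathfrak{U}}(G/\Phi(P))\Rightarrow P\le Z_{\mathfrak{U}}(G)$ of \cite[Lemma~2.4]{WS}, the behaviour of $Z_{\mathfrak{U}}$ on chief factors below $\Phi(P)$ (as in \cite[Lemma~2.2(3)]{GW2}), and the control of $P$ by a Thompson critical subgroup (as in \cite[Lemma~2.5]{WS} and \cite[Lemma~2.12]{GW3}).

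First I would dispose of the case $k=1$. Let $C$ be a Thompson critical subgroup of $P$ and put $\Omega:=\Omega(C)$, which is characteristic in $P$, hence normal in $G$. If $\Omega<P$, then the hypothesis (with $k=1$) still holds for $(G,\Omega)$, so minimality gives $\Omega\le Z$, and then $P\le Z$ by the critical-subgroup lemma, a contradiction; hence $\Omega=P$, so by \cite[Lemma~2.5]{WS} every non-identity element of $P$ has order $p$, or order $2$ or $4$ when $p=2$. Now, exactly as in Step~2 of the proof of Theorem~\ref{main result for characterization of hypercentre}, I would take a chief factor $P/S$ of $G$, use minimality on $(G,S)$ to get $S\le Z$ and $P/S$ non-cyclic, note that every proper $G$-invariant subgroup of $P$ lies in $S$, choose $x\in L\setminus S$ of order $p$ or $4$ where $L/S$ is a minimal subgroup of $(P/S)\cap Z(M/S)$ for $M\in{\rm Syl}_p(G)$, and observe that the $\mathfrak{U}$-embeddedness of $\langle x\rangle$ forces it to cover or avoid $P/S$; either alternative makes $P/S$ cyclic of order $p$, a contradiction. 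Hence $k>1$ from now on.

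For $k>1$ I would mimic Steps~3--6 of the proof of Theorem~\ref{main result for characterization of hypercentre}. First, $|N|\le p^{k}$ for every minimal normal subgroup $N$ of $G$ contained in $P$, since otherwise a subgroup of order $p^{k}$ strictly between $1$ and $N$ would have to cover or avoid $N/1$, which is impossible. Next I would verify that the hypothesis descends to $(G/N,P/N)$, so that minimality gives $P/N\le Z_{\mathfrak{U}}(G/N)$ for every such $N$. If $\Phi(P)\ne 1$, picking $N$ minimal normal in $G$ with $N\le\Phi(P)$ yields $P/N\le Z_{\mathfrak{U}}(G/N)$, hence $P/\Phi(P)\le Z_{\mathfrak{U}}(G/\Phi(P))$ (as in \cite[Lemma~2.2(3)]{GW2}), hence $P\le Z$ by \cite[Lemma~2.4]{WS}, a contradiction. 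If $\Phi(P)=1$, then $P$ is elementary abelian, $N$ is the unique minimal normal subgroup of $G$ inside $P$, $Z\cap P=1$ and $|N|>p$; taking a maximal subgroup $X$ of $N$, a complement $B$ of $N$ in $P$, and a subgroup of $XB$ of order $p^{k}$ containing $X$, its $\mathfrak{U}$-embeddedness forces it to cover or avoid $N/1$, and each alternative is absurd. This final contradiction would complete the proof.

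I expect the descent of the hypothesis to $(G/N,P/N)$ in the case $k>1$ to be the main obstacle, for the same reasons as in Step~4 of the proof of Theorem~\ref{main result for characterization of hypercentre}. When $|N|$ is close to $p^{k}$ one cannot simply invoke the inheritance lemma: the borderline value $|N|=p^{k}$ must be ruled out directly --- typically by writing a suitable overgroup of $N$ inside $P$ as a product of two of its maximal subgroups, one containing $N$, which forces $|N|=p$ against $k>1$ --- and when $|N|=p^{k-1}$ with $p=2$ the quotient falls into the regime where the order-$4$ clause is relevant, so one must re-derive that clause for $(G/N,P/N)$ by analysing, for each cyclic $Q/N\le P/N$ of order $4$, whether its preimage $Q$ is cyclic, and using the maximal subgroups of $Q$ accordingly. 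Getting this order-$4$ bookkeeping right, and confirming that the few genuinely exceptional configurations that survive are small enough to force $P\le Z$ on their own, is where the real work lies.
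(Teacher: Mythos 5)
The paper does not prove this lemma: it is quoted verbatim from \cite[Proposition 3.1]{WS}, so there is no internal proof to compare against. Judged on its own terms, your sketch reproduces the right architecture (minimal counterexample, Thompson critical subgroup and $\Omega$ in the $k=1$ case, reduction modulo minimal normal subgroups plus the Frattini-quotient step via \cite[Lemma 2.2(3)]{GW2} and \cite[Lemma 2.4]{WS} in the $k>1$ case), which is indeed the template shared by Theorem \ref{main result for characterization of hypercentre} and by the source.

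However, there is a genuine gap at every point where the hypothesis is actually invoked. You repeatedly assert that the $\mathfrak{U}$-embeddedness of a subgroup ``forces it to cover or avoid'' a chief factor ($P/S$ in the $k=1$ case, $N/1$ in the bound $|N|\le p^{k}$ and in the $\Phi(P)=1$ case). That is false: $\mathfrak{U}$-embedded is strictly weaker than the cover--avoidance property. By \cite[Definitions 1.1 and 1.2]{WS}, $A$ is $\mathfrak{U}$-embedded in $G$ when $AT=G$ and $A\cap T\le S\le A$ for some $K$-$\mathfrak{U}$-subnormal subgroup $T$ and some generalized $CAP$-subgroup $S$, and a generalized $CAP$-subgroup need only satisfy, on a non-avoided abelian $p$-chief factor $H/K$, the condition that $|G:N_G((S\cap H)K)|$ is a $p$-number --- not that $(S\cap H)K=H$. (The implication runs the other way: Step 3 of the paper's proof of Theorem \ref{characterization for FSG strong CAP} shows that strong $CAP$-subgroups are $\mathfrak{U}$-embedded, precisely so that this lemma can be applied to them.) Consequently the dichotomies you extract --- ``$N\le K$ or $K\cap N=1$'', ``$\langle x\rangle$ covers or avoids $P/S$'' --- are not available, and the contradictions that drive your $k=1$ argument, your Step giving $|N|\le p^{k}$, and your elementary-abelian case all evaporate. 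A correct proof must work directly with the normalizer condition and the $K$-$\mathfrak{U}$-subnormal supplement, which is substantially more delicate. If you strengthen the hypothesis of your sketch to ``strong $CAP$-subgroup'' the outlined argument does go through, but then you have proved only a special case of the quoted lemma.
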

\section{Characterizations for $p$-supersolvability of finite groups}\label{Characterizations for $p$-supersolvability of finite groups}
\begin{theorem}\label{characterization for G minimal order}
Let $p$ be a prime and $G$ be a group of finite order. Suppose that every cyclic subgroup of $G$ with order $p$ or $4$ (if $p=2$) is a strong $p$-$CAP$-subgroup of $G$. Then $G$ is $p$-supersolvable.
\end{theorem}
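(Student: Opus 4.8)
The plan is to argue by contradiction through a minimal counterexample. Suppose the theorem fails and let $G$ be a group of least order satisfying the hypothesis but not $p$-supersolvable. Since every cyclic subgroup of order $p$ or $4$ of a subgroup $K \leq G$ is again such a subgroup of $G$, Lemma \ref{inheritance}(2) shows that the hypothesis passes to every subgroup of $G$; hence every proper subgroup of $G$ is $p$-supersolvable, and in particular $p$-solvable.

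I would first carry out the usual radical reductions. If $O_{p'}(G) \neq 1$, then the preimage in $G$ of any cyclic subgroup of $G/O_{p'}(G)$ of order $p$ or $4$ has $O_{p'}(G)$ as a normal Hall $p'$-subgroup, so by the Schur--Zassenhaus theorem it has a cyclic complement $P_0 \leq G$ of that same order; since $P_0$ is a strong $p$-$CAP$-subgroup of $G$, Lemma \ref{inheritance}(4) shows $P_0 O_{p'}(G)/O_{p'}(G)$ is a strong $p$-$CAP$-subgroup of $G/O_{p'}(G)$, so $G/O_{p'}(G)$ inherits the hypothesis, is $p$-supersolvable by minimality, and then $G$ is $p$-supersolvable, a contradiction; thus $O_{p'}(G) = 1$. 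Next, if $G$ has a minimal normal subgroup $N \neq G$, then $N$ is $p$-supersolvable and characteristically simple while $p \mid |N|$ (otherwise $N \leq O_{p'}(G) = 1$), which forces $N$ to be an elementary abelian $p$-group; if instead $G$ is itself characteristically simple, then $G$ is a $p$-group or a $p'$-group (hence already $p$-supersolvable) or a direct power $T^{k}$ of a non-abelian simple group $T$ with $p \mid |T|$, and in the latter case for $k \geq 2$ a direct factor $T$ is a proper non-$p$-supersolvable subgroup, while for $k = 1$ a subgroup $X$ of order $p$ of $G = T$ neither covers nor avoids the unique $pd$-chief factor $G/1$, contradicting that $X$ is a $p$-$CAP$-subgroup of $G$. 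Hence $O_p(G) \neq 1$, and applying Theorem \ref{hypercentre} to $O_p(G)$ (whose cyclic subgroups of order $p$ or $4$ are strong $p$-$CAP$-subgroups of $G$ by hypothesis) gives $O_p(G) \leq Z_{\mathfrak{U}}(G)$; in particular every $G$-chief factor below $O_p(G)$ is cyclic of order $p$.

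The crucial step is to show that $G$ is $p$-solvable. If it is not, I would first note that $E(G) \neq 1$, for otherwise $F^{*}(G) = F(G) = O_p(G)$ (using $O_{p'}(G) = 1$), so $C_G(O_p(G)) \leq O_p(G)$, and the concluding argument below would then force $G/O_p(G)$ to be a $p'$-group, making $G$ $p$-solvable. Picking a component $L$ and putting $M = \langle L^{G}\rangle$, one checks that $p \mid |L/Z(L)|$ (otherwise $M$ would be a nontrivial normal $p'$-subgroup, contradicting $O_{p'}(G) = 1$), so $M$ is not $p$-solvable; since every proper subgroup of $G$ is $p$-solvable this gives $M = G$, and because distinct components commute elementwise, $G = M$ acts trivially on the set of $G$-conjugates of $L$ while acting transitively on it, so this set is a singleton and $G = L$ is quasi-simple with $Z(G)$ a $p$-group and $p \mid |G/Z(G)|$. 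But then $G/Z(G)$ is a $pd$-chief factor of $G$ that no cyclic subgroup of $G$ of order $p$ (or $4$, when $p = 2$) lying outside $Z(G)$ can cover or avoid, contradicting that such a subgroup is a $p$-$CAP$-subgroup of $G$. The delicate point here, and what I expect to be the main obstacle, is to produce such a cyclic subgroup outside $Z(G)$ and thereby eliminate the quasi-simple case in full generality; this rests on the Sylow structure of quasi-simple groups of order divisible by $p$ --- essentially on Thompson's classification of the simple groups all of whose proper subgroups are soluble together with its $p$-analogue --- and one may alternatively appeal to a known classification of minimal non-$p$-supersolvable groups.

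It remains to finish under the established facts that $G$ is $p$-solvable, $O_{p'}(G) = 1$, and $O_p(G) \leq Z_{\mathfrak{U}}(G)$. The Hall--Higman lemma gives $C_G(O_p(G)) \leq O_p(G)$. Fix part of a $G$-chief series $1 = P_0 < P_1 < \cdots < P_m = O_p(G)$; each $P_i/P_{i-1}$ has order $p$, so $G/C_G(P_i/P_{i-1})$ is cyclic of order dividing $p-1$, whence $O_p(G) \leq C_G(P_i/P_{i-1})$ for all $i$, and $X := \bigcap_i C_G(P_i/P_{i-1})$ is a normal subgroup of $G$ with $G/X$ abelian of exponent dividing $p-1$, hence a $p'$-group. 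As $X$ stabilizes the chain $(P_i)$, the stability group theorem shows $X/C_X(O_p(G))$ is a $p$-group, and since $C_X(O_p(G)) \leq C_G(O_p(G)) \leq O_p(G)$ is a $p$-group, $X$ is a normal $p$-subgroup of $G$, so $X = O_p(G)$. Therefore $G/O_p(G)$ is a $p'$-group; combined with $O_p(G) \leq Z_{\mathfrak{U}}(G)$, this shows every chief factor of $G$ is a $p'$-group or cyclic of order $p$, i.e. $G$ is $p$-supersolvable --- the final contradiction.
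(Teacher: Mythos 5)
Your overall architecture matches the paper's: minimal counterexample, every proper subgroup $p$-supersolvable, $O_{p'}(G)=1$, $O_p(G)\leq Z_{\mathfrak{U}}(G)$ via Theorem \ref{hypercentre}, reduction through $F^{*}(G)$ and components to the case where $G$ is quasi-simple with $p$ dividing $|G/Z(G)|$. But at exactly the decisive point you leave a genuine gap, and you say so yourself: to get a contradiction you need a cyclic subgroup of order $p$ or $4$ \emph{not} contained in $Z(G)$, and you propose to obtain one from Thompson's classification of minimal simple groups or from a classification of minimal non-$p$-supersolvable groups. Neither is carried out, and neither is needed. The paper closes the quasi-simple case without producing such an element: since $G/Z(G)$ is a non-abelian chief factor, a cyclic $\langle x\rangle$ of order $p$ or $4$ cannot cover it (covering gives $G=Z(G)\langle x\rangle$, forcing $G/Z(G)$ abelian), so every such $\langle x\rangle$ must \emph{avoid} it, i.e.\ every element of order $p$ or $4$ lies in $Z(G)$. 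That universal conclusion is then fed into It\^o's theorem ({\cite[Chapter IV, Satz 5.5]{H1}}) to deduce that $G$ is $p$-nilpotent, hence $p$-supersolvable --- the contradiction. In other words, the non-existence of the element you are trying to construct is itself the engine of the contradiction; your obstacle dissolves once you aim at $p$-nilpotency rather than at a cover/avoid violation. (The complementary case $p\nmid |G/Z(G)|$ still has to be treated: there $Z(G)$ is a $p$-group, $F(G)=Z(G)\leq Z_{\mathfrak{U}}(G)$, and $G$ is $p$-supersolvable directly.)

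Two smaller points. First, your parenthetical that $p\nmid |L/Z(L)|$ would make $M=\langle L^{G}\rangle$ a normal $p'$-subgroup is not justified, since $Z(L)$ may have order divisible by $p$; the paper instead notes simply that $p\mid |H|$ for a component $H$ because $O_{p'}(G)=1$, so $H/Z(H)$ is a non-abelian simple section forcing $H$ non-$p$-supersolvable and hence $H=G$. Second, your concluding stability-group argument in the $p$-solvable case is correct but redundant once the quasi-simple case is handled as above: the paper gets $G/F(G)\in\mathfrak{U}$ from $F(G)\leq Z_{\mathfrak{U}}(G)$, $C_G(F^{*}(G))\leq F^{*}(G)$ and {\cite[Chapter IV, Theorem 6.10]{DH}}, which is shorter. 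As written, your proof is incomplete at its central step.
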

\begin{proof}
Suppose that the theorem is false and let $G$ be a counterexample of minimal order. We break the argument into separately stated steps.
\begin{itemize}
\item[\textbf{Step 1.}] $G$ is a minimal non-$p$-supersolvable group.
\end{itemize}

Let $M$ be a maximal subgroup of $G$, and $L$ be a subgroup of $M$ of order $p$ or $4\,(p=2)$. Applying Lemma \ref{inheritance} (2), we conclude that $L$ is a strong $p$-$CAP$-subgroup of $M$, and so  $M$ satisfies the hypothesis of the theorem. By the minimality of $G$, $M$ is $p$-supersolvable. As a result, it follows from the choice of $M$ that every proper subgroup of $G$ is $p$-supersolvable, i.e. $G$ is a minimal non-$p$-supersolvable group.
\begin{itemize}
\item[\textbf{Step 2.}] $O_{p^{'}} (G)=1$, and $F (G) = O_p (G)$.
\end{itemize}

Set $\overline{G} = G/O_{p'} (G)$. Assume that $\overline{L}$ is a cyclic subgroup of $\overline{G}$ of order $p$ or $4\,(p=2)$. Then we can write $\overline{L} = L O_{p'} (G) /O_{p'} (G)$, where $L$ is a cyclic subgroup of $G$ of order $p$ or $4$. It follows from our assumption that $L$ is a strong $p$-$CAP$-subgroup of $G$. Therefore we conclude from Lemma \ref{inheritance} (4) that $L O_{p'} (G)/O_{p'} (G) = \overline{L}$ is a strong $p$-$CAP$-subgroup of $\overline{G}$. Therefore $\overline{G}$ satisfies the hypothesis of the theorem. Suppose that $O_{p'} (G)>1$, then $\overline{G}$ is $p$-supersolvable by our choice of $G$. By the inheritance of $p$-supersolvability we get that $G$ is $p$-supersolvable, which contradicts the fact that $G$ is not $p$-supersolvable and Step 2 is finished.
\begin{itemize}
\item[\textbf{Step 3.}] $F(G) \leq Z_{\mathfrak{U}} (G)$.
\end{itemize}

Since $F(G) =O_p (G)$ is a normal $p$-subgroup of $G$,  it follows from Theorem \ref{hypercentre} that $O_p (G) \leq Z_{\mathfrak{U}} (G)$.
\begin{itemize}
\item[\textbf{Step 4.}] $G = F^{*} (G)$.
\end{itemize}

Suppose that $F^{*} (G) < G$. Then by the $p$-supersolvability of $F^{*} (G)$ we get that $F^{*} (G) = F(G)=O_p (G)$. Applying  Step 3, we conclude that $F(G)$ is $\mathfrak{U}$-hypercentral in $G$. It follows from {{\cite[Chapter IV, Theorem 6.10]{DH}}} that $[G^{\mathfrak{U}},F(G)]=1$, i.e. $G / C_G (F(G)) \in \mathfrak{U}$. By the fact that $C_G (F^{*} (G)) \leq F^{*} (G)$ and the inheritance of supersolvability, we have  that $G/F(G) \in \mathfrak{U}$. Since every $G$-chief factor below $F(G)$ is of order $p$, it follows that $G$ is $p$-supersolvable, a contradiction. Therefore we obtain that $G =F^{*} (G)$.
\begin{itemize}
\item[\textbf{Step 5.}] $G/Z(G)$ is a non-abelian simple group  and $G$ is perfect. 
\end{itemize}

We obtain from Step 4 that $G = F^{*} (G) = F(G) E(G)$. Since $G$ is not nilpotent, it follows that $E(G)$ is not contained in $F(G)$. Hence we can find a component $H$ of $G$. Then $H$ is normal in $F^{*} (G) = G$, and $H/Z(H)$ is a non-abelian simple group. Applying Step 2,  $|H|$ is divisible by $p$. In particular, $H$ is not $p$-supersolvable, and so $H=G$, i.e. $G/Z(G)$ is a  non-abelian simple group and $G$ is perfect. 
\begin{itemize}
\item[\textbf{Step 6.}] Final contradiction.
\end{itemize}

Let $M$ be a maximal subgroup of $G$ which does not contain $Z(G)$. Then $M Z(G) =G$ and   $M$ is normal in $G$. Since $G/M =Z(G) M/M \cong Z(G) / M \cap Z(G)$, it follows that $G^{'} \leq M$, contradicting the fact that $G$ is perfect. Thus no such $M$ exists and $Z(G) \leq \Phi (G)$. Now let $G/A$ be a chief factor of $G$. Since $A$ is normal in $G$, we get  that $A \leq Z(G)$, so that $A =Z(G)$. Hence $Z(G)$  lies in all chief series of $G$. If $p \,|\,|G/Z(G)|$, let $x$ be an arbitrary element of $G$ of order $p$ or $4\,(p=2)$. It follows from our hypothesis that $\langle x \rangle$ is a strong $p$-$CAP$-subgroup of $G$. Therefore $\langle x \rangle$ covers or avoids $G/Z(G)$. If $G = G \langle x \rangle = Z(G) \langle x \rangle$, it follows that $G/Z(G)$ is abelian, a contradiction to the fact that $G/Z(G)$ is non-abelian by Step 5. Therefore  we have that $G \cap \langle x \rangle = \langle x \rangle = Z(G) \cap \langle x \rangle$, i.e. $x \in Z(G)$. By the choice  of $x$, we conclude from {{\cite[Chapter IV, Satz 5.5]{H1}}} that $G$ is $p$-nilpotent, contradicting the fact that $G$ is not $p$-supersolvable. Thus  $ |G/Z(G)|$ is not divisible by $p$. If $Z(G)$ has non-trivial $p'$-part, it follows that $O_{p'} (G) >1$, a contradiction to Step 2. Thus $Z(G)$ is a $p$-group and  so $Z(G) \leq O_p (G) = F(G)$. However, since $G/ Z(G)$ is a non-abelian simple group, it follows from $F(G) \unlhd G$ that $F(G) \leq Z(G)$.
Thus we get that $F(G) =Z(G)$. By Step 2, we conclude that every $G$-chief factor below $Z(G)$ is of order $p$, so  that $G$ is $p$-supersolvable, a contradiction. Finally, we obtain  that no such counterexample of $G$ exists and we are done.
\end{proof}
\begin{theorem}
Let $\mathfrak{F}$ be a saturated formation containing $\mathfrak{U}$, and let $G$ be a finite group with a normal subgroup $H$ such that $G/H \in \mathfrak{F}$. If, for any $p \in \pi(G)$, every cyclic subgroup of $F^{*} (H)$ of order $p$ or $4$ (if $p=2$) is a strong $p$-$CAP$-subgroup of $G$, then $G \in \mathfrak{F}$. 
\end{theorem}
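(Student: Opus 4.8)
The plan is to reduce everything to the single inclusion $F^{*}(H)\le Z_{\mathfrak{U}}(G)$. Granting this, since $\mathfrak{U}\subseteq\mathfrak{F}$ forces every $\mathfrak{U}$-central chief factor to be $\mathfrak{F}$-central, we get $Z_{\mathfrak{U}}(G)\le Z_{\mathfrak{F}}(G)$, hence $F^{*}(H)\le Z_{\mathfrak{F}}(G)$; combined with $G/H\in\mathfrak{F}$, the classical fact that a saturated formation $\mathfrak{F}$ contains any group $G$ possessing a normal subgroup $H$ with $G/H\in\mathfrak{F}$ and $F^{*}(H)\le Z_{\mathfrak{F}}(G)$ (see, e.g., \cite{BB}) yields $G\in\mathfrak{F}$. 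So the real content is the inclusion $F^{*}(H)\le Z_{\mathfrak{U}}(G)$, which I would establish by treating $F(H)$ and $E(H)$ separately.

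For the Fitting subgroup: for each prime $q$, the subgroup $O_{q}(H)$ is characteristic in $H$ and so normal in $G$, and $O_{q}(H)\le F(H)\le F^{*}(H)$. Thus every cyclic subgroup of $O_{q}(H)$ of order $q$ (equivalently, of prime order, as $O_{q}(H)$ is a $q$-group) or, when $q=2$, of order $4$, is a cyclic subgroup of $F^{*}(H)$ of that order, hence a strong $q$-$CAP$-subgroup of $G$ by hypothesis. Applying Theorem \ref{hypercentre} to the normal $q$-subgroup $O_{q}(H)$ of $G$ gives $O_{q}(H)\le Z_{\mathfrak{U}}(G)$, and since $Z_{\mathfrak{U}}(G)$ is a $\mathfrak{U}$-hypercentral normal subgroup of $G$ containing every $O_{q}(H)$, it contains their product $F(H)$.

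For the layer, I would show $E(H)=1$. Suppose not and let $K$ be a component of $H$; then $K$ is quasisimple, $K/Z(K)$ is non-abelian simple, and $K\le E(H)\le F^{*}(H)\le G$. Fix a prime $q\mid|K|$. Every cyclic subgroup $L$ of $K$ of order $q$, and (when $q=2$) every cyclic subgroup of $K$ of order $4$, is a cyclic subgroup of $F^{*}(H)$ of that order, hence a strong $q$-$CAP$-subgroup of $G$; by Lemma \ref{inheritance}(2) it is therefore a strong $q$-$CAP$-subgroup of $K$. So $K$ satisfies the hypotheses of Theorem \ref{characterization for G minimal order} for the prime $q$, whence $K$ is $q$-supersolvable; as $q$ ranges over all prime divisors of $|K|$, $K$ is supersolvable. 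But then $K/Z(K)$ is a non-abelian simple epimorphic image of a supersolvable (in particular solvable) group, which is absurd. Hence $E(H)=1$ and $F^{*}(H)=F(H)\le Z_{\mathfrak{U}}(G)$, finishing the reduction.

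The bookkeeping with $F(H)$ is routine, being an immediate appeal to Theorem \ref{hypercentre}; the one step requiring an idea is the elimination of $E(H)$, and the crux there is that the strong $q$-$CAP$ property is inherited by intermediate overgroups (Lemma \ref{inheritance}(2)), which is exactly what allows a component to be fed into the minimal-order criterion Theorem \ref{characterization for G minimal order}. The only ingredient external to the excerpt is the standard formation-theoretic reduction quoted in the first paragraph.
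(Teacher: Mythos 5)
Your argument is correct and is essentially the paper's own proof: both reduce the problem to $F^{*}(H)\le Z_{\mathfrak{U}}(G)$ via Lemma \ref{inheritance}(2), Theorem \ref{characterization for G minimal order} (to force $F^{*}(H)$ solvable, hence $E(H)=1$ and $F^{*}(H)=F(H)$) and Theorem \ref{hypercentre}, and then close with the standard formation-theoretic step. The only cosmetic differences are that you kill $E(H)$ component by component where the paper shows $F^{*}(H)$ is supersolvable outright, and you cite the final reduction (which you should quote in its usual form with $Z_{\mathfrak{U}}(G)$ rather than $Z_{\mathfrak{F}}(G)$ --- the hypothesis you actually establish) where the paper derives it inline from $G/C_G(F(H))\in\mathfrak{U}$ and $C_H(F^{*}(H))\le F^{*}(H)$.
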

\begin{proof}
Assume that $H >1$, and let $p$ be a prime such that $p \,|\,|F^{*} (H)|$. By the hypothesis of our theorem, Lemma \ref{inheritance}(2) and Theorem \ref{characterization for G minimal order}, we conclude that $F^{*} (H)$ is $p$-supersolvable for all primes $p \,|\,|F^{*} (H)|$. Therefore $F^{*} (H)$ is supersolvable and so $F^{*} (H) = F(H)$. By  the definition of $F(H)$ and Theorem \ref{hypercentre}, we obtain that $F^{*} (H) = F(H) \leq Z_{\mathfrak{U}} (G)$. Applying  {{\cite[Chapter IV, Theorem 6.10]{DH}}}, we get  that $G/C_G (F(H)) \in \mathfrak{U}$. Since $G/H \in \mathfrak{F}$ and $\mathfrak{F} \supseteq \mathfrak{U}$, it yields that $G/C_H (F(H)) \in \mathfrak{F}$. As $C_H (F^{*} (H)) \leq F^{*} (H)$, we see from the inheritance property of $\mathfrak{F}$ that $G/F^{*} (H) = G/F(H) \in \mathfrak{F}$. Since each $G$-chief factor below $F(H) \leq Z_{\mathfrak{U}} (G)$ is of prime order,   we get that $G \in \mathfrak{F}$, as needed.
\end{proof}

\begin{proof}[Proof of Theorem \ref{characterization for G to be supersolvable partial CAP}]
Set $d =|S| \cdot p^{-1}$. If $d = 1$, then we obtain that $|S| = p$, a contradiction to the fact that $|S| > p$. Hence we must have that $d >1$.  Then we obtain that every subgroup of $S$ of order $d$ is a partial $CAP$-subgroup of $G$, every cyclic subgroup of order 4 of $G$ is a partial $CAP$-subgroup of  $G$ (if $  p=2$ and $S$ is
non-abelian). Applying \cite[Theorem C]{BL2}, we conclude that $G$ is a $p$-solvable group. Now, we claim that $G$ is $p$-supersolvable.

Suppose that our claim is false, in other words, there exists a group $G$ satisfying such conditions but $G$ is not $p$-supersolvable. Without loss of generality, let $G$ be a counterexample with $|G|$ minimal. We would like $M$ to be a minimal normal subgroup of $G$. Since $G$ is already $p$-solvable, it follows that $M$ is either a $p$-group or a $p'$-group.  If $M$ is a $p'$-group, let $Q/M$ be a maximal subgroup of $SM/M$. Then $Q \leq SM$, and so $M = O_{p'} (Q)$. Using the  Schur-Zassenhaus Theorem, we get that there exists a $p$-group $Q_1$ such that $Q_1 \ltimes M = Q$. Denote by $S_1$ the Sylow $p$-subgroup of $SM$ containing $Q_1$. Then $S_1 ^g = S$, where $g $ is an element of $SM$ by Sylow Theorem. Therefore $(Q_1 M)^g = Q_1 ^g M = Q \unlhd SM$, and so $Q/M = Q_1 ^g M/M$ can be considered as the image of a maximal subgroup $Q_1 ^g $ of $S$ under the quotient morphism induced by $M$.  Applying \cite[Lemma 2]{FY}, we obtain that every maximal subgroup $Q/M$ of $M S / M$ is a partial $CAP$-subgroup of $G/M$. Now suppose that $M$ is a $p$-group. Then it follows that $M \leq O_p (G) \leq S$. Consider the maximal subgroup $Q/M$ of $S/M$. Then we see that $Q$ is a maximal subgroup of $S$ containing $M$, and we conclude again from \cite[Lemma 2]{FY} that $Q/M$ is a partial $CAP$-subgroup of $G/M$. Therefore, every maximal subgroup of  $G/M$ is a partial $CAP$-subgroup, and so we obtain from the minimal choice of $G$ that $G/M$ is $p$-supersolvable.

Notice that since the  class of $p$-supersolvable groups is a saturated formation $\mathfrak{U}_p$, we may assume that $M$ is the unique minimal normal subgroup of $G$. Assume that $M$ is  a $p'$-group. Then it follows from the fact  $G/M$ is $p$-supersolvable that $G$ is $p$-supersolvable, a contradiction. Now, suppose on the contrary that $M$ is a $p$-group. Then we claim that $M$ is not contained in $\Phi (S)$. If $M \leq \Phi (S)$, then we obtain from the fact $\mathfrak{U}_p$ is a saturated formation that $G$ is $p$-supersolvable, another contradiction. Therefore $M \not\neq \Phi (S)$, and so there exists   a maximal subgroup $S_1$ of $S$ such that $M \not\leq S_1$. Hence $S_1$ does not covers the chief factor $M/1$. Since $S_1$ is a partial $CAP$-subgroup of $G$, we obtain from the uniqueness of $M$ that $S_1$ must avoid the chief factor $M / 1$. Therefore $M \cap S_1 = 1 \cap S_1 = 1$, and so $M$ is a cyclic subgroup of order $p$ since $M \cdot S_1 = S$. Thus, as $G/M$ is $p$-supersolvable, it follows directly that $G$ is $p$-supersolvable, which leads to the final contradiction. Thus  our claim is true, and so $G$ is  $p$-supersolvable. 
\end{proof}
As  direct applications, we obtain the following two corollaries of characterizations for the $p$-supersolvability of $G$ under the assumptions that certain subgroups of $S \in {\rm Syl}_p (G)$ are $CAP$-subgroups or strong $p$-$CAP$-subgroups of $G$.
\begin{corollary}\label{corollary CAP}
Let $G$ be a finite group and $S$ a Sylow $p$-subgroup of $G$, where $p$ is a prime divisor of $|G|$. If the order of $S$ is larger than $p$, every maximal subgroup  of $S$ is a  $CAP$-subgroup of $G$, and every
cyclic subgroup of order $4$ of $G$ is a  $CAP$-subgroup of  $G$ ($  p=2$ and $S$ is
non-abelian), then $G$ is $p$-supersolvable.
\end{corollary}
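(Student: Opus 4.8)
The plan is to reduce immediately to Theorem \ref{characterization for G to be supersolvable partial CAP} by observing that the $CAP$-property is stronger than the partial $CAP$-property. First I would recall the definitions: a subgroup $A$ of $G$ is a $CAP$-subgroup if it covers or avoids \emph{every} chief factor $H/K$ of $G$, whereas $A$ is a partial $CAP$-subgroup if there exists \emph{some} chief series of $G$ each of whose factors is covered or avoided by $A$. Hence, given any $CAP$-subgroup $A$ of $G$, we may fix an arbitrary chief series $1 = G_0 < G_1 < \cdots < G_m = G$; since $A$ covers or avoids every chief factor of $G$, in particular it covers or avoids each $G_{i+1}/G_i$, so $A$ is a partial $CAP$-subgroup of $G$.

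Applying this observation to the hypotheses at hand: every maximal subgroup of $S$, being a $CAP$-subgroup of $G$, is a partial $CAP$-subgroup of $G$; and (when $p = 2$ and $S$ is non-abelian) every cyclic subgroup of order $4$ of $G$, being a $CAP$-subgroup of $G$, is likewise a partial $CAP$-subgroup of $G$. Together with the assumption $|S| > p$, this is precisely the hypothesis of Theorem \ref{characterization for G to be supersolvable partial CAP}. That theorem then yields that $G$ is $p$-supersolvable, as required.

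There is no real obstacle here; the only point needing care is the direction of the implication between the two covering/avoidance notions, and this is settled by the remark above that ``covers or avoids all chief factors'' trivially implies ``covers or avoids every factor of a fixed chief series''.
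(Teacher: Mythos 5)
Your proposal is correct and matches the paper's intent: the corollary is stated there as a direct application of Theorem \ref{characterization for G to be supersolvable partial CAP}, with the (implicit) observation that a $CAP$-subgroup, covering or avoiding every chief factor, in particular covers or avoids each factor of some fixed chief series and is therefore a partial $CAP$-subgroup. Your write-up simply makes that reduction explicit.
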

\begin{corollary}
Let $G$ be a finite group and $S$ a Sylow $p$-subgroup of $G$, where $p$ is a prime divisor of $|G|$.  Suppose that the order of $S$ is larger than $p$, every maximal subgroup  of $S$ and every
cyclic subgroup of order $4$ of $G$ ($  p=2$ and $S$ is
non-abelian) is a strong $q$-$CAP$-subgroup of $G$ for any prime divisor $q$ of $|G|$. Then $G$ is $p$-supersolvable.
\end{corollary}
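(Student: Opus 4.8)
The plan is to deduce this corollary directly from Theorem~\ref{characterization for G to be supersolvable partial CAP} (equivalently, from Corollary~\ref{corollary CAP}), by observing that the hypothesis — being a strong $q$-$CAP$-subgroup of $G$ for \emph{every} prime divisor $q$ of $|G|$ — is strong enough to force the subgroups in question to be $CAP$-subgroups, hence partial $CAP$-subgroups, of $G$. So the whole point is an elementary unwinding of definitions that reduces the statement to a theorem already proved.

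First I would note that if $A \le G$ is a strong $q$-$CAP$-subgroup of $G$, then taking $H = G$ in the definition of a strong $q$-$CAP$-subgroup shows that $A$ is itself a $q$-$CAP$-subgroup of $G$; that is, $A$ covers or avoids every chief factor of $G$ whose order is divisible by $q$. Now let $H/K$ be an arbitrary chief factor of $G$. Since $|H/K| > 1$, some prime $q$ divides $|H/K|$, and necessarily $q \mid |G|$; by hypothesis $A$ is then a $q$-$CAP$-subgroup of $G$, so $A$ covers or avoids $H/K$. As $H/K$ was arbitrary, $A$ is a $CAP$-subgroup of $G$, and in particular $A$ covers or avoids every factor of any fixed chief series of $G$, so $A$ is a partial $CAP$-subgroup of $G$.

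Applying this observation to each maximal subgroup of $S$ and to each cyclic subgroup of order $4$ of $G$ (in the case $p = 2$ with $S$ non-abelian), we obtain that all of these subgroups are partial $CAP$-subgroups of $G$. Since $|S| > p$, the hypotheses of Theorem~\ref{characterization for G to be supersolvable partial CAP} are met, whence $G$ is $p$-supersolvable. (Alternatively, since we have shown the relevant subgroups are even $CAP$-subgroups of $G$, one may invoke Corollary~\ref{corollary CAP} verbatim.)

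I do not expect any genuine obstacle: the mathematical content is already carried by Theorem~\ref{characterization for G to be supersolvable partial CAP}, and the deduction rests only on the chain of implications ``strong $q$-$CAP$-subgroup for all $q \mid |G|$'' $\Rightarrow$ ``$CAP$-subgroup'' $\Rightarrow$ ``partial $CAP$-subgroup''. The single point worth spelling out is that every chief factor, having order larger than $1$, is a $qd$-chief factor for at least one prime $q \mid |G|$, so that the hypothesis really does apply to every chief factor of $G$.
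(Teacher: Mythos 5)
Your proposal is correct and follows essentially the same route as the paper: the paper likewise observes that every chief factor of $G$ has order divisible by some prime $r \mid |G|$, concludes that a subgroup which is a strong $q$-$CAP$-subgroup for every prime divisor $q$ of $|G|$ is a $CAP$-subgroup of $G$, and then invokes Corollary~\ref{corollary CAP} (itself a consequence of Theorem~\ref{characterization for G to be supersolvable partial CAP} via the implication $CAP$ $\Rightarrow$ partial $CAP$). Your extra remark that one only needs the case $H=G$ of the strong $q$-$CAP$ definition is accurate and matches the paper's use of the hypothesis.
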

\begin{proof}
Let $H$ be a subgroup of $G$ such that $H$ is a strong $q$-$CAP$-subgroup of $G$ for any prime divisor $q$ of $|G|$. Then for any chief factor $U/V$ of $G$, it follows that there exists a prime divisor $r$ of $|G|$ such that $r \,|\,|U/V|$. Therefore we conclude that $H$ covers or avoids $U/V$, and we  obtain from the choice of $U/V$ that $H$ is a $CAP$-subgroup of $G$. Hence every maximal subgroup  of $S$ and every
cyclic subgroup of  $G$ of order 4  (if $  p=2$ and $S$ is
non-abelian) are   $CAP$-subgroups of $G$,   so that $G$ is $p$-supersolvable by Corollary \ref{corollary CAP}, as needed.
\end{proof}

\section{Characterizations for  supersolvability of saturated  fusion systems}\label{Characterizations for  supersolvability of saturated  fusion systems}
In this section, we investigate the structure of $\mathcal{F}_S (G)$ under the assumptions that certain subgroups of $G$ are strong $p$-$CAP$-subgroups, strong $CAP$-subgroups or partial $CAP$-subgroups of $G$, and obtain several criteria for fusion system $\mathcal{F}_S (G)$ to be supersolvable and finite group $G$ to be $p$-nilpotent. Methods used in this section are mainly related to fusion systems, which are totally different from those of Section \ref{Characterizations for $p$-supersolvability of finite groups}. At the beginning, we prove a result a little weaker compared to Theorem \ref{characterization for G minimal order} using the method of fusion systems, to showcase the universality of fusion system methods.

\begin{theorem}\label{characterization for FSG strong p-CAP}
Let $G$ be a finite group, $p$ be a prime divisor of $|G|$, and $S$ be a Sylow $p$-subgroup of $G$. Suppose that any cyclic subgroup of $S$ with order $p$ or $4$ (if $p$=2) is a strong $p$-$CAP$-subgroup of $G$.  Then $\mathcal{F}_S (G)$  is supersolvable.
\end{theorem}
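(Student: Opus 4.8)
The plan is to reduce the statement to the group-theoretic fact already established in Theorem \ref{characterization for G minimal order}, together with the fusion-theoretic reduction lemmas of Aseeri and Kaspczyk. First I would apply Theorem \ref{characterization for G minimal order}: since every cyclic subgroup of $S$ of order $p$ or $4$ is a strong $p$-$CAP$-subgroup of $G$, and a cyclic subgroup of $G$ of order $p$ or $4$ is conjugate into $S$ (so by Lemma \ref{inheritance} its image is still strong $p$-$CAP$ — here one must be slightly careful, since the hypothesis is stated only for subgroups \emph{of} $S$; but any cyclic subgroup of order $p$ or $4$ lies in some Sylow $p$-subgroup, which is $G$-conjugate to $S$, and strong $p$-$CAP$ is conjugation-invariant), we conclude that $G$ itself is $p$-supersolvable. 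If one is content to phrase the hypothesis as covering all cyclic subgroups of $S$, this is immediate; in any case $G \in \mathfrak{U}_p$. Hence every $p$-chief factor of $G$ has order $p$, and running through a chief series as in the Remark in the introduction, each chief factor has a cyclic Sylow $p$-subgroup of order $1$ or $p$. By \cite[Theorem 1.4]{SN} (or \cite[Proposition 1.3]{SN}), $\mathcal{F}_S(G)$ is supersolvable, and we are done.

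However, the point of this theorem — as the paragraph preceding it announces — is to \emph{avoid} invoking $p$-supersolvability of $G$ directly and instead argue purely inside fusion systems, thereby illustrating the method that will be needed for Theorems \ref{characterization for FSG strong CAP} and \ref{main theorem}, where $G$ need not be $p$-supersolvable. So the intended route is an induction on $|G|$ using Lemmas \ref{fusion system 1} and \ref{fusion system 2}. First I would check that the hypothesis is inherited: for any subgroup $H$ of $G$ with $S \cap H \in {\rm Syl}_p(H)$, every cyclic subgroup of $S \cap H$ of order $p$ or $4$ is a strong $p$-$CAP$-subgroup of $H$ by Lemma \ref{inheritance}(2). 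Thus by induction $\mathcal{F}_{S\cap H}(H)$ is supersolvable for every proper such $H$. To apply Lemma \ref{fusion system 2} it then remains to show $O_p(G) \leq Z_{\mathfrak{U}}(G)$; but $O_p(G)$ is a normal $p$-subgroup of $G$ and every cyclic subgroup of $O_p(G)$ of prime order or order $4$ is, being a subgroup of $S$ (after conjugation), a strong $p$-$CAP$-subgroup of $G$, so Theorem \ref{hypercentre} gives exactly $O_p(G) \leq Z_{\mathfrak{U}}(G)$. Lemma \ref{fusion system 2} then yields that $\mathcal{F}_S(G)$ is supersolvable.

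The one gap to close carefully is the base/degenerate cases of the induction and the hypothesis of Lemma \ref{fusion system 2} concerning proper subgroups $H$ with $O_p(G) < S \cap H$: one must confirm that such $H$ genuinely satisfy the inductive hypothesis (they do, by Lemma \ref{inheritance}(2), since $S \cap H$ is a Sylow $p$-subgroup of $H$ and every cyclic subgroup of $S \cap H$ of order $p$ or $4$ is already a strong $p$-$CAP$-subgroup of $G$, hence of $H$), and that when $G$ has no such proper subgroups — e.g. when $O_p(G) = S$ — the conclusion is still immediate from $O_p(G) = S \leq Z_{\mathfrak{U}}(G)$ forcing $\mathcal{F}_S(G) = \mathcal{F}_S(S)$ to be supersolvable (a $p$-group has a central chief series, all of whose terms are strongly closed and cyclic).

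The main obstacle is essentially bookkeeping rather than depth: reconciling the hypothesis "subgroups of $S$" with the conjugacy needed to feed subgroups of arbitrary Sylow $p$-subgroups (or of $O_p(G)$, or of $S \cap H$) into Theorem \ref{hypercentre} and into the inductive step. Once one observes that strong $p$-$CAP$ is preserved under conjugation and under passage to overgroups (Lemma \ref{inheritance}(2)), everything falls into place, and the verification of the two hypotheses of Lemma \ref{fusion system 2} is routine. I would write the proof via the induction with Lemmas \ref{fusion system 1}–\ref{fusion system 2} and Theorem \ref{hypercentre}, precisely to set the template for the harder theorems that follow.
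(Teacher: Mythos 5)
Your second (fusion-theoretic) route is essentially the paper's own proof: a minimal counterexample, inheritance of the strong $p$-$CAP$ hypothesis to proper subgroups via Lemma \ref{inheritance}(2), Theorem \ref{hypercentre} to obtain $O_p(G)\leq Z_{\mathfrak{U}}(G)$, and Lemma \ref{fusion system 2} to conclude. The only difference is that the paper inserts an intermediate analysis of the essential subgroups in $\mathcal{E}_{\mathcal{F}}^{*}$ (via Lemma \ref{fusion system 1}) to establish $|O_p(G)|\geq p^2$ before applying these lemmas, a step your streamlined version correctly renders unnecessary here, since neither Theorem \ref{hypercentre} nor the inductive hypothesis for the subgroups $H$ with $O_p(G)<S\cap H$ actually requires that bound.
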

\begin{proof} 
Assume that the theorem is false, and let $G$ be a counterexample of  minimal order. Now denote $\mathcal{F}_S (G)$ by $\mathcal{F}$.
\begin{itemize}
\item[\textbf{Step 1.}] Let $H$ be a proper subgroup of $G$ such that $S \cap H \in {\rm Syl}_p (H)$ and $|S \cap H| \geq p^2$. Then $\mathcal{F}_{S \cap H} (H)$ is supersolvable.
\end{itemize}

By our hypothesis, every cyclic subgroup $T$ of $S \cap H$ with order $p$ or $4$ (if $p=2$) is a strong $p$-$CAP$-subgroup of $G$. Then every cyclic subgroup $T$ of $S \cap H$ with order $p$ or $4$ (if $p =2$) is a strong $p$-$CAP$-subgroup of $H$ by Lemma \ref{fusion system 1} (2). Hence  $H$ satisfies the hypothesis of the theorem and it follows from the minimal choice of $G$ that $\mathcal{F}_{S \cap H} (H)$ is supersolvable.
\begin{itemize}
\item[\textbf{Step 2.}] Let $Q \in \mathcal{E}_{\mathcal{F}} ^{*}$, then $|Q| \geq p^2$. If moreover that $Q \not\unlhd G$, then $N_{\mathcal{F}} (Q)$ is supersolvable.
\end{itemize}

Suppose that there exists a subgroup $Q \in \mathcal{E}_{\mathcal{F}} ^{*}$ such that $|Q| < p^2$. Then there is a subgroup $R$ of $S$ such that $|R| = p $, and $Q <R$. It follows directly that $R \leq C_S (Q)$. Since $Q < R \leq S$, we conclude from the fact $Q$ is a member of $\mathcal{E}_{\mathcal{F}} ^{*}$ that $Q$ is $\mathcal{F}$-essential. Hence it follows from the definition of $\mathcal{E}_{\mathcal{F}} ^{*}$ that $Q$ is $\mathcal{F}$-centric. Therefore $R \leq C_S (Q) =Z(Q) \leq Q$, a contradiction. Thus we have that $|Q| \geq p^2$.

Assume that $Q$ is not normal in $G$. Then $N_G (Q)$ is a proper subgroup of $G$. Since $Q \in \mathcal{E}_{\mathcal{F}} ^{*}$, $Q$ is fully $\mathcal{F}$-normalized or $Q=S$. Clearly $S$ is fully $\mathcal{F}$-normalized, hence $Q$ is always fully $\mathcal{F}$-normalized. By the argument below {{\cite[Definition 2.4]{AK}}}, $S \cap N_G (Q) = N_S (Q) \in {\rm Syl}_p (N_G(Q))$. Since $|N_S (Q)| \geq |Q| \geq p^2$, it  follows that $N_G (Q)$ satisfies the hypothesis of Step 1, and so $\mathcal{F}_{N_S (Q)} (N_G(Q))=N_{\mathcal{F}} (Q)$ is supersolvable.
\begin{itemize}
\item[\textbf{Step 3.}] $|O_p (G)| \geq p^2$.
\end{itemize}

Assume that there does not exist a subgroup $Q \in \mathcal{E}_{\mathcal{F}} ^{*}$ such that $Q \unlhd G$. Then for each $Q \in  \mathcal{E}_{\mathcal{F}} ^{*}$, the fusion system $N_{\mathcal{F}} (Q)$ is supersolvable by Step 2. By Lemma \ref{fusion system 1}, $\mathcal{F}$ is supersolvable, a contradiction. Thus there exists a subgroup $Q \in \mathcal{E}_{\mathcal{F}} ^{*}$ such that $Q \unlhd G$. Hence we conclude from Step 2 that $|O_p (G)| \geq |Q| \geq p^2$.
\begin{itemize}
\item[\textbf{Step 4.}] $O_p (G) \leq Z_{\mathfrak{U}} (G)$.
\end{itemize}

It follows from the inequality $|O_p (G)| \geq p^2$ that every subgroup $T$ of $O_p (G)$ of order $p$ and every cyclic subgroup $T$ of $O_p (G)$ of order $4$ (if  $p =2$) is a strong $p$-$CAP$-subgroup of $G$. Thus we obtain from Theorem \ref{hypercentre} that $O_p (G) \leq Z_{\mathfrak{U}} (G)$ and this part is complete.
\begin{itemize}
\item[\textbf{Step 5.}] Final contradiction.
\end{itemize}

Suppose that $H$ is a proper subgroup of $G$ such that $O_p (G) < S \cap H$ and $S \cap H \in {\rm Syl}_p (H)$. By Step 1 and Step 3, $|S \cap H| >|O_p (G)| \geq p^2$ and so $\mathcal{F}_{S \cap H} (H)$ is supersolvable. Since $O_p (G) \leq Z_{\mathfrak{U}} (G)$ by Step 4, it follows directly from Lemma \ref{fusion system 2} that $\mathcal{F}_S (G)$ is supersolvable, a contradiction. Hence our proof is complete.
\end{proof}
As a direct application of the theorem above, we obtain the following characterization for the structure of finite groups under the assumption that every cyclic subgroup of $S$ with order $p$ or $4$ (if $p$=2) is a strong $p$-$CAP$-subgroup of $G$. 
\begin{corollary}
Let $G$ be a finite group and $S$ a Sylow $p$-subgroup of $G$, where $p$ is a  prime divisor of $|G|$ such that $(p-1, |G|)=1$. Suppose that any cyclic subgroup of $S$ with order $p$ or $4$ (if $p$=2) is a strong $p$-$CAP$-subgroup of $G$. Then $G$ is $p$-nilpotent.
\end{corollary}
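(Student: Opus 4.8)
The plan is to reduce this corollary to Theorem \ref{characterization for FSG strong p-CAP} by showing that, under the extra arithmetic hypothesis $(p-1,|G|)=1$, supersolvability of $\mathcal{F}_S(G)$ forces $G$ to be $p$-nilpotent. First I would apply Theorem \ref{characterization for FSG strong p-CAP} directly to conclude that $\mathcal{F}_S(G)$ is supersolvable. The real content is then the purely fusion-theoretic (or group-theoretic) implication: a finite group whose $p$-fusion system is supersolvable, and for which $p-1$ is coprime to $|G|$, must be $p$-nilpotent.

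To establish this implication I would argue as follows. Since $\mathcal{F}_S(G)$ is supersolvable, by \cite[Proposition 1.3]{SN} there is a $p$-supersolvable group $\tilde G$ with $S\in\mathrm{Syl}_p(\tilde G)$ and $\mathcal{F}_S(\tilde G)=\mathcal{F}_S(G)$. A $p$-supersolvable group has a normal series whose $p$-chief factors all have order $p$, so $\tilde G/O_{p'}(\tilde G)$ has a normal Sylow-tower-type structure in which every $p$-chief factor is central modulo the centralizer of exponent dividing $p-1$; equivalently $\tilde G$ induces on each such factor a group of automorphisms of order dividing $p-1$. Alternatively, and more cleanly, one uses the standard fact that for a $p$-supersolvable group $\tilde G$ one has $\tilde G = O_{p'}(\tilde G)\,N$ where $\tilde G/O_{p'}(\tilde G)$ is $p$-supersolvable with $O_{p'}(\tilde G/\cdots)=1$, and then $\tilde G/O_{p'}(\tilde G)$ has a normal $p$-subgroup $P$ with $\tilde G/P$ abelian of exponent dividing $p-1$ acting on the chief factors of $P$. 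Since $\mathcal{F}_S(G)=\mathcal{F}_S(\tilde G)$ depends only on the $p$-local structure, and since supersolvability of $\mathcal{F}_S(G)$ is equivalent to the existence of an $\mathcal{F}$-series of strongly closed subgroups with cyclic quotients, the key observation is that each automorphism of a cyclic $p$-group induced in $\mathcal{F}_S(G)$ has order dividing $p-1$; the hypothesis $(p-1,|G|)=1$ then forces all these induced automorphisms to be trivial, so every strongly $\mathcal{F}$-closed cyclic quotient is centralized by $S$ in $\mathcal{F}$, whence $\mathcal{F}_S(G)$ is the fusion system of a $p$-nilpotent group; by a theorem of Frobenius in the fusion-system setting (e.g. $\mathcal{F}_S(G)=\mathcal{F}_S(S)$ forces $G$ to be $p$-nilpotent), $G$ is $p$-nilpotent.

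Concretely, the cleanest route avoids passing to $\tilde G$ at all: since $\mathcal{F}_S(G)$ is supersolvable there is a chain $1=S_0\le S_1\le\cdots\le S_n=S$ of strongly $\mathcal{F}_S(G)$-closed subgroups with $S_{i+1}/S_i$ cyclic; for each $i$, $\mathrm{Aut}_{\mathcal{F}}(S_{i+1}/S_i)$ embeds in $\mathrm{Aut}(S_{i+1}/S_i)$, and the $p'$-part of this automorphism group has order dividing $p-1$. As $\mathcal{F}_S(G)$ is a saturated fusion system, $\mathrm{Out}_{\mathcal{F}}$ acting on each factor is a $p'$-group (by the saturation axioms, the relevant automizers are $p'$-groups after quotienting by the inner part), so each induced automorphism group has order dividing $p-1$; since $(p-1,|G|)=1$ and every $\mathcal{F}$-automorphism is induced by conjugation in $G$, all induced automorphisms are trivial. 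Hence $S$ itself is normalized with trivial fusion, i.e. $\mathcal{F}_S(G)=\mathcal{F}_S(S)$, which by the fusion-theoretic form of Frobenius's normal $p$-complement theorem yields that $G$ is $p$-nilpotent.

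The main obstacle I anticipate is making the step ``the automorphisms induced on the cyclic strongly closed factors have order dividing $p-1$'' fully rigorous within the fusion-system language — one must invoke that in a saturated fusion system the automizer $\mathrm{Aut}_{\mathcal{F}}(Q)$ for $Q$ fully normalized has $\mathrm{Out}_{\mathcal{F}}(Q)$ of order prime to $p$ when $Q$ is suitably chosen (or handle the general $Q$ via restriction along the strongly closed chain), and then combine the bound $|{\rm Aut}(C_{p^k})|=p^{k-1}(p-1)$ with strong closure to extract a divisor of $p-1$. Once that bound is in hand, the coprimality hypothesis kills everything and the Frobenius-type conclusion is routine; I would cite the appropriate statement from \cite{CR} (or \cite{SN}, \cite{FJ}) for the final ``$\mathcal{F}_S(G)=\mathcal{F}_S(S)\Rightarrow G$ $p$-nilpotent'' implication.
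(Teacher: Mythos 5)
Your first step coincides with the paper's: apply Theorem \ref{characterization for FSG strong p-CAP} to conclude that $\mathcal{F}_S(G)$ is supersolvable. Where you diverge is in the second step. The paper simply invokes \cite[Theorem 1.9]{SN}, which is precisely the implication ``$\mathcal{F}_S(G)$ supersolvable and $(p-1,|G|)=1$ imply $G$ is $p$-nilpotent,'' so its entire proof is a two-citation deduction; you instead reconstruct a proof of that implication. Your ``cleanest route'' is essentially sound and can be made rigorous as follows: for any $Q\le S$ and any $p'$-element $\alpha$ of $\mathrm{Aut}_{\mathcal{F}}(Q)=N_G(Q)/C_G(Q)$, strong closure of the $S_i$ gives $\alpha(Q\cap S_i)=Q\cap S_i$; each factor $(Q\cap S_{i+1})/(Q\cap S_i)$ embeds in the cyclic group $S_{i+1}/S_i$, so the automorphism that $\alpha$ induces there has $p'$-part of order dividing $\gcd(p-1,|G|)=1$; hence $\alpha$ stabilizes the chain of the $Q\cap S_i$, is therefore a $p$-element, and so is trivial. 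Thus $N_G(Q)/C_G(Q)$ is a $p$-group for every $p$-subgroup $Q$, and Frobenius's normal $p$-complement theorem finishes. Two points in your write-up need repair, though: the detour through a $p$-supersolvable model $\tilde G$ with $\mathcal{F}_S(\tilde G)=\mathcal{F}_S(G)$ gains nothing, since $(p-1,|\tilde G|)$ need not equal $1$; and the parenthetical claim that saturation makes the outer automizer a $p'$-group is false for general $Q$ (it holds only for $Q=S$), so the chain-stabilizer argument, not saturation, is what must carry the load. The trade-off is clear: the paper's route is shorter and delegates the fusion-theoretic content to Su's theorem, while yours is self-contained but requires the tightening indicated above.
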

\begin{proof}
It follows from Theorem \ref{characterization for FSG strong p-CAP} that $\mathcal{F}_S (G)$  is supersolvable. Thus we conclude from {{\cite[Theorem 1.9]{SN}}} that $G$ is $p$-nilpotent, as desired.
\end{proof}
\begin{proof}[Proof of Theorem \ref{characterization for FSG strong CAP}]
Assume that the theorem is false, and let $G$ be a counterexample of  minimal order. Now denote $\mathcal{F}_S (G)$ by $\mathcal{F}$.
\begin{itemize}
\item[\textbf{Step 1.}] Suppose that $H$ is a proper subgroup of $G$ such that $S \cap H \in {\rm Syl}_p (H)$ and $|S \cap H| \geq p|D|$. Then  the fusion system $\mathcal{F}_{S \cap H} (H)$ is supersolvable.
\end{itemize}

By our hypothesis, every  subgroup $T$ of $S \cap H$ with order $|D|$   or $4$ (if $S$ is non-abelian and $|D| = p = 2$) is a strong $CAP$-subgroup of $G$. Then every cyclic subgroup $T$ of $S \cap H$ with order $|D|$ or $4$ (if $S \cap H$ is non-abelian and $|D| = p = 2$) is a strong $CAP$-subgroup of $H $ by the definition of strong $CAP$-subgroup {\cite{BL}}. Hence  $H$ satisfies the hypothesis of the theorem and it follows from the minimal choice of $G$ that $\mathcal{F}_{S \cap H} (H)$ is supersolvable.
\begin{itemize}
\item[\textbf{Step 2.}] Let $Q \in \mathcal{E}_{\mathcal{F}} ^{*}$, then $|Q| \geq p|D|$. If moreover that $Q \not\unlhd G$, then $N_{\mathcal{F}} (Q)$ is supersolvable.
\end{itemize}

Suppose that there exists a subgroup $Q \in \mathcal{E}_{\mathcal{F}} ^{*}$ such that $|Q| < p|D|$. Then there is a subgroup $R$ of $S$ such that $|R| = |D| $, and $Q <R$. It follows directly that $R \leq C_S (Q)$. Since $Q < R \leq S$, we conclude from the fact $Q$ is a member of $\mathcal{E}_{\mathcal{F}} ^{*}$ that $Q$ is $\mathcal{F}$-essential. Therefore $Q$ is $\mathcal{F}$-centric. Hence $R \leq C_S (Q) =Z(Q) \leq Q$, a contradiction. Thus we get that $|Q| \geq p|D|$. Now assume that $Q$ is not normal in $G$. Therefore $N_G (Q)$ is a proper subgroup of $G$. Since $Q \in \mathcal{E}_{\mathcal{F}} ^{*}$, $Q$ is fully $\mathcal{F}$-normalized or $Q=S$. Clearly $S$ is fully $\mathcal{F}$-normalized, hence $Q$ is always fully $\mathcal{F}$-normalized. By the argument below {{\cite[Definition 2.4]{AK}}}, $S \cap N_G (Q) = N_S (Q) \in {\rm Syl}_p (N_G(Q))$. Since $|N_S (Q)| \geq |Q| \geq p|D|$, it follows that $N_G (Q)$ satisfies the hypothesis of Step 1, and so $\mathcal{F}_{N_S (Q)} (N_G(Q))=N_{\mathcal{F}} (Q)$ is supersolvable.
\begin{itemize}
\item[\textbf{Step 3.}] $|O_p (G)| \geq p|D|$, and moreover $O_p (G) \leq Z_{\mathfrak{U}} (G)$. 
\end{itemize}

Assume that there does not exist a subgroup $Q \in \mathcal{E}_{\mathcal{F}} ^{*}$ such that $Q \unlhd G$. Then for each $Q \in  \mathcal{E}_{\mathcal{F}} ^{*}$, the fusion system $N_{\mathcal{F}} (Q)$ is supersolvable by Step 2. By Lemma \ref{fusion system 1}, $\mathcal{F}$ is supersolvable, a contradiction. Thus there exists a subgroup $Q \in \mathcal{E}_{\mathcal{F}} ^{*}$ such that $Q \unlhd G$. Hence we conclude from Step 2 that $|O_p (G)| \geq |Q| \geq p|D|$. Now we claim that $O_p (G) \leq Z_{\mathfrak{U}} (G)$. Actually, it follows from $|O_p (G)| \geq p|D|$ that any subgroup $T$ of $O_p (G)$ of order $|D|$ or any cyclic subgroup $T$ of $O_p (G)$ of order $4$ (if  $S$ is non-abelian, $|D| = p =2$) is a strong $CAP$-subgroup of $G$. Now let $W$ be an arbitrary strong $CAP$-subgroup of $G$. Then for any chief factor $U/V$ of $G$, $W$ avoids or covers $U/V$. If $W$ covers $U/V$, i.e. $WV \geq U$, then we conclude from the Dedekind modular law that $V ( U \cap W) = W V \cap U = U$, and so $|U : V ( U \cap W)| = |G : N_G (V ( U \cap W))|=1$, which implies that $W$ is a generalized $CAP$-subgroup by {\cite[Definition 1.1]{WS}}. Notice that $WG = G$ and  $G \cap W =W$, where $G$ is obviously a $K$-$\mathfrak{U}$-subnormal subgroup of $G$ by {\cite[Definition 6.1.4]{BB}}, we obtain from {\cite[Definition 1.2]{WS}} that $W$ is $\mathfrak{U}$-embedded in $G$. By the choice of $W$, any subgroup $T$ of $O_p (G)$ of order $|D|$ or any cyclic subgroup $T$ of $O_p (G)$ of order $4$ (if  $S$ is non-abelian, $|D| = p =2$) is $\mathfrak{U}$-embedded in $G$. Therefore, applying Lemma \ref{lemma GWB}, it follows directly that $O_p (G) \leq Z_{\mathfrak{U}} (G)$ and this part is complete.
\begin{itemize}
\item[\textbf{Step 4.}] Final contradiction.
\end{itemize}

Suppose that $H$ is a proper subgroup of $G$ such that $O_p (G) < S \cap H$ and $S \cap H \in {\rm Syl}_p (H)$. By Step 1 and Step 3, $|S \cap H| >|O_p (G)| \geq p|D|$, and so $\mathcal{F}_{S \cap H} (H)$ is supersolvable. Since $O_p (G) \leq Z_{\mathfrak{U}} (G)$ by Step 3, it follows directly from Lemma \ref{fusion system 2} that $\mathcal{F}_S (G)$ is supersolvable, a contradiction. Hence our proof is complete.
\end{proof}

\begin{corollary}\label{corollary FSG strong q-CAP}
Let $G$ be a finite group, $p$ be a prime divisor of $|G|$, and $S$ be a Sylow $p$-subgroup of $G$. Suppose that there exists a subgroup $D$ of $S$ with order $1 < |D| < |S|$ such that every subgroup of $S$ with order $|D|$ and every cyclic subgroup of $S$ with order  $4$ (if $S$ is non-abelian and $|D|=p=2$) is a strong $q$-$CAP$-subgroup for any prime divisor $q$ of $|G|$. Then $\mathcal{F}_S (G)$  is supersolvable.
\end{corollary}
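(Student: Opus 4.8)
The plan is to deduce this from Theorem \ref{characterization for FSG strong CAP}. The only thing that needs checking is that, under the present hypothesis, every distinguished subgroup of $S$ (that is, every subgroup of order $|D|$, together with every cyclic subgroup of order $4$ when $S$ is non-abelian and $|D|=p=2$) is in fact a strong $CAP$-subgroup of $G$; once this is established, the hypothesis of Theorem \ref{characterization for FSG strong CAP} holds verbatim and the conclusion follows immediately. This mirrors the argument already used in the proof following Corollary \ref{corollary CAP}.

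First I would fix one such subgroup $T$ and show it is a strong $CAP$-subgroup of $G$, i.e.\ a $CAP$-subgroup of every subgroup $K$ of $G$ with $T \le K \le G$. Take any such $K$ and any chief factor $U/V$ of $K$. Since $U/V$ is a non-trivial group, there is a prime $r$ with $r \mid |U/V|$; then $r \mid |K|$, hence $r$ is a prime divisor of $|G|$. By hypothesis $T$ is a strong $r$-$CAP$-subgroup of $G$, and since $T \le K$, Lemma \ref{inheritance}~(2) gives that $T$ is a strong $r$-$CAP$-subgroup of $K$; in particular $T$ is an $r$-$CAP$-subgroup of $K$. As $U/V$ is an $rd$-chief factor of $K$ (its order is divisible by $r$), $T$ covers or avoids $U/V$. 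Since $U/V$ was an arbitrary chief factor of $K$, $T$ is a $CAP$-subgroup of $K$; since $K$ was an arbitrary subgroup of $G$ containing $T$, $T$ is a strong $CAP$-subgroup of $G$, as claimed.

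Applying this to every subgroup of $S$ of order $|D|$ and (in the exceptional case) to every cyclic subgroup of $S$ of order $4$, we see that $G$, $p$, $S$ and $D$ satisfy the hypotheses of Theorem \ref{characterization for FSG strong CAP}. Therefore $\mathcal{F}_S(G)$ is supersolvable, completing the proof. I do not anticipate any real obstacle here: the statement is a formal corollary, and the only points requiring care are the correct invocation of the intermediate property (Lemma \ref{inheritance}~(2)) and matching the order-$4$ clause of Theorem \ref{characterization for FSG strong CAP} precisely.
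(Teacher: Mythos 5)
Your proposal is correct and follows essentially the same route as the paper: both reduce to Theorem \ref{characterization for FSG strong CAP} by showing each distinguished subgroup is a strong $CAP$-subgroup of $G$, using that any chief factor of any intermediate subgroup $K$ has order divisible by some prime $r\mid |G|$ and then invoking the strong $r$-$CAP$ hypothesis. (The appeal to Lemma \ref{inheritance}(2) is harmless but not even needed, since the definition of a strong $r$-$CAP$-subgroup already makes $T$ an $r$-$CAP$-subgroup of every overgroup $K$.)
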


\begin{proof} 
Let $H$ be an arbitrary subgroup of $G$ such that $H$ is a strong $q$-$CAP$-subgroup for  any prime divisor $q$ of $|G|$. Then for any chief factor $U/V$ of a subgroup $T$ of $G$ containing $H$, it follows that there exists a prime divisor $r$ of $|T|$ such that $r \, |\, |U/V|$. Then it follows from the fact  $H$ is a strong $r$-$CAP$-subgroup of $G$ that $H$ covers or avoids $U/V$. By the choice of $U/V$, we conclude that $H$ is a $CAP$-subgroup of $T$, and so we obtain from the choice of $T$ that $H$ is a strong $CAP$-subgroup of $G$. Hence we conclude that there exists a subgroup $D$ of $S$ with order $1 < |D| < |S|$ such that every subgroup of $S$ with order $|D|$ and every cyclic subgroup of $S$ with order  $4$ (if $S$ is non-abelian and $|D|=p=2$) are strong $CAP$-subgroups of $G$. Applying Theorem \ref{characterization for FSG strong CAP}, we get that the fusion system $\mathcal{F}_S (G)$  is supersolvable.
\end{proof}
As a direct application of the theorem above, we obtain the following characterizations for the structure of finite groups under the assumption that every subgroup of $S$ with order $1 <|D| <S$ for some subgroup $D$ of $S$ or $4$ (if $S$ is non-abelian and $|D| = 2 = p$) is a $CAP$-subgroup of $G$ or a strong $q$-$CAP$-subgroup of $G$ for any prime divisor $q$ of $|G|$. 
\begin{corollary}
Let $G$ be a finite group and $S$ a Sylow $p$-subgroup of $G$, where $p$ is a  prime divisor of $|G|$ such that $(p-1, |G|)=1$. Suppose that there exists a subgroup $D$ of $S$ with order $1 <|D| < |S|$ such that every subgroup of $S$ with order $|D|$ and every cyclic subgroup of $S$ with order  $4$ (if $S$ is non-abelian and $|D|=p=2$) is a strong $CAP$-subgroup of $G$, then $G$ is $p$-nilpotent.
\end{corollary}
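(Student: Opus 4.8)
The plan is to reduce this statement immediately to Theorem \ref{characterization for FSG strong CAP} together with the $p$-nilpotency criterion for groups with supersolvable $p$-fusion systems from \cite{SN}; no genuinely new argument is needed. First I would check that the hypotheses of the corollary are verbatim those of Theorem \ref{characterization for FSG strong CAP}: there is a subgroup $D$ of $S$ with $1<|D|<|S|$ such that every subgroup of $S$ of order $|D|$ is a strong $CAP$-subgroup of $G$, and, in the exceptional case $S$ non-abelian and $|D|=p=2$, every cyclic subgroup of $S$ of order $4$ is a strong $CAP$-subgroup of $G$. Hence Theorem \ref{characterization for FSG strong CAP} applies and gives that the fusion system $\mathcal{F}_S(G)$ is supersolvable.

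Second, I would feed the supersolvability of $\mathcal{F}_S(G)$ into {\cite[Theorem 1.9]{SN}}, which asserts that a finite group whose $p$-fusion system is supersolvable is $p$-nilpotent provided the arithmetic condition $(p-1,|G|)=1$ holds; this is exactly the device already used in the corollary following Theorem \ref{characterization for FSG strong p-CAP}. Since $(p-1,|G|)=1$ is part of our hypothesis, we conclude that $G$ is $p$-nilpotent, as desired.

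There is essentially no obstacle: the statement is a repackaging of Theorem \ref{characterization for FSG strong CAP} with an off-the-shelf result, so the proof will be only a couple of lines. The one point deserving a moment's care is making sure that the exceptional ``order $4$'' clause in the corollary's hypothesis coincides with the one demanded by Theorem \ref{characterization for FSG strong CAP} — it does — so that no separate case analysis for $p=2$, $|D|=2$, $S$ non-abelian is required; one could also remark in passing that, since $(2-1,|G|)=1$ always, the arithmetic hypothesis is automatic when $p=2$.
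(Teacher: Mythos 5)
Your proposal is correct and matches the paper's own proof exactly: invoke Theorem \ref{characterization for FSG strong CAP} to get supersolvability of $\mathcal{F}_S(G)$, then apply \cite[Theorem 1.9]{SN} with the hypothesis $(p-1,|G|)=1$ to conclude $p$-nilpotency. Nothing further is needed.
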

\begin{proof}
It follows from Theorem \ref{characterization for FSG strong CAP} that $\mathcal{F}_S (G)$  is supersolvable. Thus we conclude from {{\cite[Theorem 1.9]{SN}}} that $G$ is $p$-nilpotent, as desired.
\end{proof}
\begin{corollary}
Let $G$ be a finite group and $S$ a Sylow $p$-subgroup of $G$, where $p$ is a  prime divisor of $|G|$ such that $(p-1, |G|)=1$. Suppose that there exists a subgroup $D$ of $S$ with order $1 < |D| < |S|$ such that every subgroup of $S$ with order $|D|$ and every cyclic subgroup of $S$ with order  $4$ (if $S$ is non-abelian and $|D|=p=2$) are strong $q$-$CAP$-subgroups of $G$ for any prime divisor $q$ of $|G|$. Then $G$ is $p$-nilpotent.
\end{corollary}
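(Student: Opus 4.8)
The plan is to reduce the statement to two facts already established: the supersolvability of $\mathcal{F}_S(G)$ under the stated covering/avoidance hypotheses, and the $p$-nilpotency criterion for finite groups whose $p$-fusion system is supersolvable when $(p-1,|G|)=1$. In other words, the proof I would give mirrors exactly the short proof of the preceding corollary (the strong $CAP$-subgroup version), with Theorem \ref{characterization for FSG strong CAP} replaced by Corollary \ref{corollary FSG strong q-CAP}.

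First, I would observe that the hypothesis of the present corollary coincides verbatim with the hypothesis of Corollary \ref{corollary FSG strong q-CAP}: there exists a subgroup $D$ of $S$ with $1 < |D| < |S|$ such that every subgroup of $S$ of order $|D|$, together with every cyclic subgroup of $S$ of order $4$ in the exceptional case ($S$ non-abelian and $|D| = p = 2$), is a strong $q$-$CAP$-subgroup of $G$ for every prime divisor $q$ of $|G|$. Hence Corollary \ref{corollary FSG strong q-CAP} applies directly and yields that the fusion system $\mathcal{F}_S(G)$ is supersolvable. (Recall that inside the proof of that corollary one checks the elementary point that a subgroup which is a strong $q$-$CAP$-subgroup of $G$ for every prime divisor $q$ of $|G|$ is automatically a strong $CAP$-subgroup of $G$, since for any chief factor $U/V$ of a subgroup containing it one may pick a prime dividing $|U/V|$; so no extra work is needed here.)

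Second, I would invoke \cite[Theorem 1.9]{SN}: if the saturated fusion system $\mathcal{F}_S(G)$ is supersolvable and $(p-1,|G|)=1$, then $G$ is $p$-nilpotent. Applying this with the supersolvability obtained in the first step completes the argument. I do not anticipate any genuine obstacle: all the real content is carried by the chain Theorem \ref{characterization for FSG strong CAP} $\Rightarrow$ Corollary \ref{corollary FSG strong q-CAP} and by Su's cited $p$-nilpotency criterion, and the present statement is merely the specialization obtained by adjoining the coprimality hypothesis $(p-1,|G|)=1$.
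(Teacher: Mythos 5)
Your proposal is correct and matches the paper's own proof exactly: the paper likewise deduces supersolvability of $\mathcal{F}_S(G)$ from Corollary \ref{corollary FSG strong q-CAP} and then concludes $p$-nilpotency of $G$ from \cite[Theorem 1.9]{SN} using $(p-1,|G|)=1$. No differences worth noting.
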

\begin{proof}
It follows from Corollary \ref{corollary FSG strong q-CAP} that $\mathcal{F}_S (G)$  is supersolvable. Therefore we obtain from {{\cite[Theorem 1.9]{SN}}} that $G$ is $p$-nilpotent, as required.
\end{proof}
 Similar to the proof of Theorem \ref{characterization for FSG strong p-CAP} and  Theorem \ref{characterization for FSG strong CAP}, we  are now going to  prove Theorem \ref{main theorem} using the method of fusion systems.
\begin{proof}[Proof of Theorem \ref{main theorem}]
Suppose that the theorem fails, and let $G$ be a counterexample of  minimal order. Denote the fusion system $\mathcal{F}_S (G)$ by $\mathcal{F}$.
\begin{itemize}
\item[\textbf{Step 1.}] Let $H$ be a proper subgroup of $G$ such that $S \cap H \in {\rm Syl}_p (H)$ and $|S \cap H| \geq p|D|$. Then  the fusion system $\mathcal{F}_{S \cap H} (H)$ must be supersolvable.
\end{itemize}

It follows from our hypothesis that every cyclic subgroup $T$ of $S \cap H$ with order $|D|$   or $4$ (If $S \cap H$ is non-abelian and $|S \cap H| /2 >|D| = p = 2$) is a strong $CAP$-subgroup of $G$, and ${\rm exp} (S \cap H) \leq 2 $ if $p=2$. Then every cyclic subgroup $T$ of $S \cap H$ with order $|D|$   or $4$ (if $S \cap H$ is non-abelian and $|S \cap H| /2 >|D| = p = 2$) is a strong $CAP$-subgroup of $H$ by Lemma \ref{inheritance} (4), and ${\rm exp} (S \cap H) \leq 2 $ if $p=2$. Hence  $H$ satisfies the hypothesis of the theorem and it follows from the minimal choice of $G$ that $\mathcal{F}_{S \cap H} (H)$ is supersolvable.
\begin{itemize}
\item[\textbf{Step 2.}] Let $Q \in \mathcal{E}_{\mathcal{F}} ^{*}$, then $|Q| \geq p|D|$. If moreover that $Q \not\unlhd G$, then $N_{\mathcal{F}} (Q)$ is supersolvable.
\end{itemize}

If there exists a subgroup $Q \in \mathcal{E}_{\mathcal{F}} ^{*}$ such that $|Q| < p|D|$, then there exists a subgroup $R$ of $S$ such that $|R| = |D| $  and $Q <R$. It follows immediately that $R \leq C_S (Q)$. Since $Q < R \leq S$, we conclude from the fact $Q$ is a member of $\mathcal{E}_{\mathcal{F}} ^{*}$ that $Q$ is $\mathcal{F}$-essential. Thus we conclude from the concept of $\mathcal{E}_{\mathcal{F}} ^{*}$ that $Q$ is $\mathcal{F}$-centric. Therefore $R \leq C_S (Q) =Z(Q) \leq Q$, a contradiction. Hence we get that $|Q| \geq p|D|$. Now suppose that $Q$ is not normal in $G$. Then $N_G (Q)$ is a proper subgroup of $G$. Since $Q \in \mathcal{E}_{\mathcal{F}} ^{*}$, $Q$ is fully $\mathcal{F}$-normalized or $Q=S$. Clearly $S$ is fully $\mathcal{F}$-normalized, hence $Q$ is always fully $\mathcal{F}$-normalized. By the argument below {{\cite[Definition 2.4]{AK}}}, $S \cap N_G (Q) = N_S (Q) \in {\rm Syl}_p (N_G(Q))$. Since $|N_S (Q)| \geq |Q| \geq p|D|$, it follows that $N_G (Q)$ satisfies the assumption of Step 1, so that $\mathcal{F}_{N_S (Q)} (N_G(Q))=N_{\mathcal{F}} (Q)$ is supersolvable.
\begin{itemize}
\item[\textbf{Step 3.}] $|O_p (G)| \geq p|D|$, and moreover $O_p (G) \leq Z_{\mathfrak{U}} (G)$.
\end{itemize}

Suppose that there does not exist a subgroup $Q \in \mathcal{E}_{\mathcal{F}} ^{*}$ such that $Q \unlhd G$. Then for each $Q \in  \mathcal{E}_{\mathcal{F}} ^{*}$, the fusion system $N_{\mathcal{F}} (Q)$ is supersolvable by Step 2. Applying Lemma \ref{fusion system 1}, $\mathcal{F}$ is supersolvable, a contradiction. Hence there exists a subgroup $Q \in \mathcal{E}_{\mathcal{F}} ^{*}$ such that $Q \unlhd G$. Therefore we obtain from Step 2 that $|O_p (G)| \geq |Q| \geq p|D|$. Now we claim that $O_p (G) \leq Z_{\mathfrak{U}} (G)$.  It follows from $|O_p (G)| \geq p|D|$ that every subgroup $T$ of $O_p (G)$ of order $|D|$ and every cyclic subgroup $T$ of $O_p (G)$ of order $4$ (if  $O_p (G)$ is non-abelian, $|O_p (G)| / 2 > |D| = p =2$) is a strong $p$-$CAP$-subgroup of $G$, and ${\rm exp} (O_p (G)) \leq 2$ if $p=2$. Applying Theorem \ref{main result for characterization of hypercentre}, we obtain that $O_p (G) \leq Z_{\mathfrak{U}} (G)$, as required.
\begin{itemize}
\item[\textbf{Step 4.}] Final contradiction.
\end{itemize}

Assume that $H$ is a proper subgroup of $G$ such that $O_p (G) < S \cap H$ and $S \cap H \in {\rm Syl}_p (H)$. By Step 1 and Step 3, $|S \cap H| >|O_p (G)| \geq p|D|$ and so $\mathcal{F}_{S \cap H} (H)$ is supersolvable. Since $O_p (G) \leq Z_{\mathfrak{U}} (G)$ by Step 3, it yields from Lemma \ref{fusion system 2} that $\mathcal{F}_S (G)$ is supersolvable, a contradiction. This final contradiction completes the proof.
\end{proof}
\begin{corollary}
Let $G$ be a finite group and $S$ a Sylow $p$-subgroup of $G$, where $p$ is a  prime divisor of $|G|$ such that $(p-1, |G|)=1$. Suppose that there exists a subgroup $D$ of $S$ with order $1 < |D| < |S|$ such that one of the following holds:
\begin{itemize}
\item[(1)] $p$ is odd, and every subgroup of $S$ with order $|D|$ is a strong $p$-$CAP$-subgroup of $G$;
\item[(2)] $p = 2$, every subgroup of $S$ with order $|D|$ is a strong $2$-$CAP$-subgroup of $G$, ${\rm exp} (S) \leq 2$, and  every cyclic subgroup of $S$ with order  $4$ is a strong $2$-$CAP$-subgroup of $G$ if $S$ is non-abelian and $|S| /2>|D|=p=2$.
\end{itemize}   
Then $G$ is $p$-nilpotent.
\end{corollary}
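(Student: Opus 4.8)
The plan is simply to chain Theorem~\ref{main theorem} with the known transfer criterion for supersolvable fusion systems. First I would note that the hypotheses of the present corollary are, clause by clause, exactly those of Theorem~\ref{main theorem}: in case (1) we have $p$ odd and every subgroup of $S$ of order $|D|$ a strong $p$-$CAP$-subgroup of $G$, while in case (2) we have $p=2$, ${\rm exp}(S)\leq 2$, every subgroup of order $|D|$ a strong $2$-$CAP$-subgroup of $G$, and every cyclic subgroup of order $4$ a strong $2$-$CAP$-subgroup of $G$ in the exceptional case $S$ non-abelian with $|S|/2>|D|=p=2$. The only genuine bookkeeping point is to confirm that this parenthetical exceptional clause is transcribed verbatim from Theorem~\ref{main theorem}. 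Granting this, Theorem~\ref{main theorem} applies directly to $G$ and yields that the fusion system $\mathcal{F}_S(G)$ is supersolvable.

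Second, with $\mathcal{F}_S(G)$ supersolvable in hand, I would invoke the extra arithmetic hypothesis $(p-1,|G|)=1$ together with {\cite[Theorem 1.9]{SN}} to conclude that $G$ is $p$-nilpotent. This is precisely the step already used in the proof of the corollary immediately following Theorem~\ref{characterization for FSG strong p-CAP}: for a supersolvable $\mathcal{F}_S(G)$ the relevant chief-type quotients of the strongly $\mathcal{F}$-closed series have order $p$ and hence contribute $\mathcal{F}$-automisers of order dividing $p-1$; when $(p-1,|G|)=1$ these are forced to be trivial, and Frobenius' normal $p$-complement theorem then delivers a normal $p$-complement in $G$. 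So the whole argument reduces to the two-line implication ``Theorem~\ref{main theorem} $\Rightarrow$ $\mathcal{F}_S(G)$ supersolvable $\Rightarrow$ (by {\cite[Theorem 1.9]{SN}}) $G$ is $p$-nilpotent.''

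There is essentially no obstacle: the statement is a direct corollary of Theorem~\ref{main theorem}, and the work is entirely absorbed into that theorem and into the cited result {\cite[Theorem 1.9]{SN}}. The only things to verify are the two matching checks mentioned above --- that the two displayed cases coincide with those of Theorem~\ref{main theorem}, and that $(p-1,|G|)=1$ is exactly the hypothesis under which supersolvability of $\mathcal{F}_S(G)$ upgrades to $p$-nilpotency of $G$ --- after which the proof is complete in a couple of sentences, mirroring the proofs of the earlier $p$-nilpotency corollaries in this section.
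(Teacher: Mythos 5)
Your proposal is correct and is exactly the paper's argument: apply Theorem~\ref{main theorem} to conclude that $\mathcal{F}_S(G)$ is supersolvable, then invoke \cite[Theorem 1.9]{SN} together with the hypothesis $(p-1,|G|)=1$ to obtain $p$-nilpotency of $G$. No further comment is needed.
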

\begin{proof}
It follows from Theorem \ref{main theorem} that $\mathcal{F}_S (G)$  is supersolvable. Hence we conclude from {{\cite[Theorem 1.9]{SN}}} that $G$ is $p$-nilpotent, as needed.
\end{proof}
\section{Acknowledgement}
The authors are very grateful to the referee for providing detailed reports, correcting the grammatical errors, and for improving the results presented in the paper considerably.

\end{document}